\newenvironment{newlist}
   {\begin{list}{}{\setlength{\labelsep}{0.25cm}
                   \setlength{\labelwidth}{0.65cm}
                   \setlength{\leftmargin}{0.9cm}}}
   {\end{list}}
\newtheorem{df}{Definition}[section]
\newtheorem{thm}[df]{Theorem}
\newtheorem{prop}[df]{Proposition}
\newtheorem{cor}[df]{Corollary}
\newtheorem*{pr1}{Problem 1}
\newtheorem{rem}[df]{Remark}
\def\mt{t\kern-0.035cm\char39\kern-0.03cm}
\def\ml{l\kern-0.035cm\char39\kern-0.03cm}
\def\md{d\kern-0.035cm\char39\kern-0.03cm}
\def\mL{L\kern-0.08cm\char39}
\def\Con{\operatorname{Con}}
\def \c {^{\circ}}
\def \cc{^{\circ\circ}}
\tikzset{%
 shaded/.style={draw, shape=circle, fill=black!35, inner sep=1.4pt},
 unshaded/.style={draw, shape=circle, fill=white, inner sep=1.4pt},
 quasi/.style={draw, shape=rectangle, rounded corners=3pt, fill=white, inner sep=2.5pt, minimum height=14.5pt},
 blob/.style={draw, shape=rectangle, rounded corners=12pt, thin, densely dotted},
 arrow/.style={->, thin, >=latex, shorten >=2.5pt, shorten <=2.5pt},
 order/.style={thin},
 curvy/.style={thin, looseness=1.2, bend angle=70},
 fatcurvy/.style={thin, looseness=1.7, bend angle=75},
 label/.style={shape=rectangle, inner sep=6pt},
 auto}
\begin{document}

\title[Congruence pairs of principal MS-algebras]{Congruence pairs of principal MS-algebras and perfect extensions}

\dedicatory{Dedicated to the memory of Professor Beloslav Rie\v
can*}

\author{Abd El-Mohsen Badawy, Miroslav Haviar \and Miroslav Plo\v s\v cica}
\newcommand{\acr}{\newline\indent}

\address{Department of Mathematics\acr Faculty of Science\acr Tanta University\acr Egypt}
\email{abdel-mohsen.mohamed@science.tanta.edu.eg}
\address{Department of Mathematics\acr Faculty of Natural Sciences \acr Matej Bel University\acr Tajovsk\'eho 40, 974 01 Bansk\'a Bystrica\acr Slovakia}
\email{miroslav.haviar@umb.sk}
\address{Institute of Mathematics\acr  Faculty of Natural Sciences \acr  \v Saf\'arik's University \acr  Jesenn\'a 5, 041 54 Ko\v sice\acr Slovakia}
\email{miroslav.ploscica@upjs.sk}

\thanks{The second author acknowledges support from Slovak grant VEGA 1/0337/16. The third author has been supported by VEGA grant 1/0097/18}

\thanks{*See Remark~\ref{Belo}}

\subjclass[2010]{Primary 06B10; Secondary 06D15}

\keywords{principal MS-algebra, MS-congruence pair}

\begin{abstract}
The notion of a congruence pair for principal MS-algebras,  simpler than the
one given by Beazer for $K_2$-algebras~\cite{B85}, is introduced. It
is proved that the congruences of the principal MS-algebras $L$
correspond to the MS-congruence pairs on simpler substructures
$L^{\cc}$ and $D(L)$ of $L$ that were associated to~$L$
in~\cite{BGH11}.

An analogy of a well-known Gr\"atzer's problem \cite[Problem
57]{G71} formulated for distributive p-algebras, which asks for a
characterization of the congruence lattices in terms of the
congruence pairs, is presented here for the principal  MS-algebras
(Problem 1). Unlike a recent solution to such a  problem for the
principal p-algebras in~\cite{B17a}, it is demonstrated here on the
class  of principal MS-algebras, that a possible solution to the
problem, though not very descriptive, can be simple and elegant.

As a step to a more descriptive solution of Problem 1, a special
case is then considered  when a principal MS-algebra $L$ is a
perfect extension of its greatest Stone subalgebra $L_{S}$. It is
shown that this is exactly when de Morgan subalgebra $L^{\cc}$ of
$L$ is a perfect extension of the Boolean algebra $B(L)$. Two
examples illustrating when this special case happens and when it
does not are presented.

\end{abstract}

\maketitle

\section{Introduction}\label{Intro}
Blyth and Varlet introduced MS-algebras abstracting de Morgan and
Stone algebras in~\cite{BV1} and \cite{BV2}. In \cite{BGH11} a class
of principal MS-algebras was introduced and a simple triple
construction of the principal MS-algebras was presented. This was
motivated by the second author constructions in~\cite{H94}
and~\cite{H95}. A~one-to-one correspondence between the class of
principle MS-algebras and the class of principle MS-triples was
proved  in \cite{BGH11}. For recent studies of (principle)
MS-algebras see also~\cite{B17c} and~\cite{BA19}.

In Section~\ref{Prel} of this paper we present some properties of
the principal MS-algebras by means of principal MS-triples. Then in
Section~\ref{Pairs} we introduce the notion of congruence pairs in
the principal MS-algebras which is simpler than the notion of
congruence pairs given by Beazer in \cite{B85} for MS-algebras from
the subvariety $K_2$. We prove that every congruence $\theta$ on a
principal MS-algebra $L$ determines so-called MS-congruence pair on
a triple associated to $L$ and conversely, each MS-congruence pair
$(\theta_1, \theta_2)$ of a principle MS-triple uniquely determines
a congruence $\theta$ on the principle MS-algebra $L$ associated to
it.

Our work in this paper has mainly been motivated by a well-known
Gr\"atzer's problem \cite[Problem 57]{G71}, which was formulated for
distributive p-algebras as follows:

{\it Let $B$ be a Boolean algebra, let $D$ be a distributive lattice
with unit, and let $A$ be a sublattice of $\Con(B)\times \Con(D)$.
Under what conditions does there exist a distributive lattice with
pseudocomplementation $L$ such that $B(L)\cong B$, $D(L)\cong D$,
and $A$ consists of all congruence pairs of $L$?}

In \cite[Corollary 1]{K76} Katri\v n\'ak solved this problem and he
in fact characterized, in terms of congruence pairs, the congruence
lattice of any modular $p$-algebra (cf. \cite[Theorem 1]{K76}). This
characterization has been then generalised to quasi-modular
$p$-algebras by El-Assar \cite{A85}. An analogy of the
Gr\"atzer's problem for the principal p-algebras has been considered
by the first author in~\cite{B17a}.

In Section~\ref{GP} we firstly present an analogy of the Gr\"atzer's
problem for the principal MS-algebras (Problem 1). Compared to the
solution for the principal p-algebras in~\cite{B17a} which aims in a
special case (with bounded lattice $D$) to mimic  Katri\v n\'ak's
solution in the general case (with unbounded $D$), our solution here
for the principal MS-algebras is much simpler and a short and
elegant proof is given.

Then we consider the analogy of the Gr\"atzer's problem for the
principal MS-algebras in a special case when a principal MS-algebra
$L$ is a perfect extension of its greatest Stone subalgebra $L_{S}$.
This case is firstly reduced to the property that a de Morgan
subalgebra $L^{\cc}$ of $L$ is a perfect extension of its Boolean
subalgebra $B(L^{\cc})=B(L)$. In Section~\ref{exam} we illustrate on
two examples when this special case happens and when it does not.

\section{Preliminaries}\label{Prel}
An \emph{Ockham algebra} is an algebra $(L;\vee,\wedge,f,0,1)$ of
type $(2,2,1,0,0)$ where $(L; \vee,\wedge, 0, 1)$ is a bounded
distributive lattice and $f$ is a unary operation such that $f(0) =
1$, $f(1) = 0$ and for all $x,y \in L$,
$$
f(x\wedge y) = f(x) \vee f(y) \quad\And \quad f(x \vee y) = f(x)
\wedge f(y).
$$
For an Ockham algebra $(L;\vee,\wedge,f,0,1)$, the subset
$$
S(L) := \{f(x) \mid x \in L\}
$$
is a subalgebra of $L$ which is called the \emph{skeleton} of $L$.

By a \emph{de Morgan-Stone algebra} or briefly an \emph{MS-algebra}
is meant an algebra $(L; \vee, \wedge, ^{\c},0, 1)$ of type
$(2,2,1,0,0)$ where $(L; \vee, \wedge, 0, 1)$ is a bounded
distributive lattice and $^{\c}$ is a unary operation such that for
all $x,y \in L$
\begin{itemize}
    \item[(MS1)] $x\leq x^{\cc}$;
    \item[(MS2)] $(x\wedge y)^{\c}=x^{\c}\vee y^{\c}$;
    \item[(MS3)] $1^{\c}=0$.
\end{itemize}

The classes of all Ockham algebras and of all MS-algebras are
equational, i.e. form varieties of algebras. Their relationship is
described in \cite[Theorem 1.4]{BV3} as follows: \emph{Every
MS-algebra is an Ockham algebra with a de Morgan skeleton. An Ockham
algebra $(L;\vee,\wedge,f,0,1)$ is an MS-algebra if and only if $x
\leq f^2(x)$ for all $x \in L$.}

Among important subvarieties of MS-algebras is the class of \emph{de
Morgan algebras} that satisfy the additional identity
\begin{itemize}
    \item[(MS4)] $x=x^{\cc}$.
\end{itemize}
Another important subvariety is that of \emph{Kleene algebras} which
are de Morgan algebras satisfying the identity
\begin{itemize}
    \item[(MS5)] $(x \wedge x^{\c}) \vee y \vee y^{\c} = y \vee y^{\c}$.
\end{itemize}
The subvariety of \emph{Stone algebras} is characterized by the
identity $x \wedge x^{\c} = 0$. The subvariety of
\emph{$K_2$-algebras} is characterized by the identity (MS5)
together with the identity
\begin{itemize}
    \item[(MS6)] $x \wedge x^{\c} = x^{\cc} \wedge x^{\c}$.
\end{itemize}
Finally, the subvariety of MS-algebras characterized by the identity
$x \vee x^{\c} = 1$ is the well-known class of \emph{Boolean
algebras}.

For an MS-algebra $L$, important subsets playing a  role in its
investigation are:
\begin{itemize}
    \item[(i)] the subalgebra $L^{\cc} = \{x\in L \mid x = x^{\cc}\}$  of
    \emph{closed} elements of $L$ which is  a de Morgan algebra (this is the skeleton $S(L)$ of $L$ when $L$
    is considered as an Ockham algebra);
    \item[(ii)] the filter $D(L) = \{x\in L \mid x^{\c}=0\}$ of  \emph{dense} elements of
    $L$;
  \item[(iii)] the Boolean subalgebra $B(L) = \{x\in L \mid x \vee x^{\c} = 1\} $ of \emph{complemented} elements of $L$;
  \item[(iv)]  the generalisation of  $B(L)$, the subalgebra $L_S = \{x\in L \mid x^{\c} \vee  x^{\cc} = 1\}$ of the elements of $L$ satisfying the \emph{Stone
  identity} which is obviously the greatest Stone subalgebra of $L$.
\end{itemize}

 It is easy to see that  $B(L)= B(L^{\cc})= B(L_S)= L_{S}^{\cc}$ and $D(L)=D(L_S)$. If $L$ is a Stone algebra itself, then clearly $B(L) = L^{\cc}$ and $L_S =
 L$. It is also obvious that if $L$ is an MS-algebra with a smallest dense element $d_L$, then the map $\varphi(L): L^{\cc} \to D(L)$ defined by
$\varphi(L)(a) = a\vee d_L$ is a $(0,1)$-lattice homomorphism.

We recall the following definitions from \cite{BGH11} motivated by the work in~\cite{H95}.

\begin{df}\label{pr}
An MS-algebra $(L; \vee,\wedge,^{\c},0,1)$ is called a {principal
MS-algebra} if it satisfies the following conditions:
\begin{itemize}
    \item[(i)] The filter $D(L)$ is principal, i.e. there exists an element $d_L \in L$ such that $D(L)=\left[d_L\right)$;
    \item[(ii)] $x=x^{\cc}\wedge (x\vee d_L)$ for any $x\in L$.
\end{itemize}
\end{df}

\begin{df}
An \rm{(}abstract\rm{)} {principal MS-triple} is $(M,D, \varphi)$, where
\begin{itemize}
    \item[(i)] $M$ is a de Morgan algebra;
    \item[(ii)] $D$ is a bounded distributive lattice;
    \item[(iii)] $\varphi$ is a $(0,1)$-lattice homomorphism from $M$ into $D$.
\end{itemize}
\end{df}

In \cite{BGH11} we proved the following result.

\begin{thm}\label{PC}
Let $(M,D,\varphi)$ be a principal
MS-triple. Then
$$
L= \{(x,y) \mid x\in M, y \in D, y \leq \varphi(x)\}
$$
is a principal MS-algebra, if one defines
$$
(x_1,y_1)\vee (x_2,y_2)=(x_1\vee x_2, y_1\vee y_2)
$$
$$
(x_1,y_1)\wedge (x_2,y_2)=(x_1\wedge x_2, y_1\wedge y_2)
$$
$$
(x,y)^{\c}=(x^{\c},\varphi(x^{\c}))
$$
$$
1_L=(1,1)
$$
$$
0_L=(0,0).
$$
Also $L^{\cc}=\{(x,\varphi(x))\mid x\in M\} \cong M$ and $D(L)=\{(1_M,y)\mid y\in D\} \cong D$.
\end{thm}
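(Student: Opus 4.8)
The plan is to proceed in four short steps: confirm that $L$ is closed under the prescribed operations, verify the MS-algebra axioms (MS1)--(MS3), check the two defining conditions of a principal MS-algebra, and finally read off the skeleton $L^{\cc}$ and the dense filter $D(L)$. Closure is immediate from $\varphi$ being a $(0,1)$-lattice homomorphism: if $y_1 \leq \varphi(x_1)$ and $y_2 \leq \varphi(x_2)$, then $y_1 \vee y_2 \leq \varphi(x_1) \vee \varphi(x_2) = \varphi(x_1 \vee x_2)$ and likewise $y_1 \wedge y_2 \leq \varphi(x_1 \wedge x_2)$, while $(x^{\c}, \varphi(x^{\c}))$, $(0,0)$ and $(1,1)$ all satisfy the defining inequality trivially. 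Hence $L$ is a bounded sublattice of the product $M \times D$, and so a bounded distributive lattice.

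The heart of the verification is the single computation $(x,y)^{\cc} = (x^{\cc}, \varphi(x^{\cc})) = (x, \varphi(x))$, which uses $x^{\cc} = x$ in the de Morgan algebra $M$. From it, (MS1) becomes $(x,y) \leq (x, \varphi(x))$, i.e. precisely the constraint $y \leq \varphi(x)$ built into $L$. Axiom (MS3) is the direct check $(1,1)^{\c} = (0, \varphi(0)) = (0,0)$, and (MS2) reduces coordinatewise to the de Morgan identity $(x_1 \wedge x_2)^{\c} = x_1^{\c} \vee x_2^{\c}$ in $M$ together with $\varphi$ preserving joins. Thus $L$ is an MS-algebra.

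For the two principal conditions I would first determine $D(L)$: a pair $(x,y)$ is dense exactly when $x^{\c} = 0$, since then the second coordinate is automatically $\varphi(0) = 0$, and in a de Morgan algebra $x^{\c} = 0$ forces $x = 1$; so $D(L) = \{(1,y) \mid y \in D\}$ with least element $d_L = (1,0)$, whence $D(L) = [d_L)$ is principal. Condition (ii) then unwinds, using the computation above, to $(x,y)^{\cc} \wedge ((x,y) \vee d_L) = (x, \varphi(x)) \wedge (1, y) = (x, \varphi(x) \wedge y) = (x,y)$, the final step once more invoking $y \leq \varphi(x)$. The skeleton is $L^{\cc} = \{(x,y) \mid (x,y) = (x,y)^{\cc}\} = \{(x, \varphi(x)) \mid x \in M\}$, and I would close by checking that $x \mapsto (x, \varphi(x))$ and $y \mapsto (1, y)$ are isomorphisms $M \cong L^{\cc}$ (of de Morgan algebras) and $D \cong D(L)$ (of bounded lattices). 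No step presents a genuine obstacle; the only point requiring consistent care is that the defining inequality $y \leq \varphi(x)$ is exactly what makes both (MS1) and condition (ii) collapse to identities, so the argument never needs more than the homomorphism property of $\varphi$ and the de Morgan laws in $M$.
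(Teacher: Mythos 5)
Your proof is correct. Note that this paper does not actually prove Theorem~\ref{PC} --- it is quoted as a known result from the earlier triple-construction paper \cite{BGH11} --- so there is no in-text argument to compare against; your direct verification (closure of $L$ under the operations, the key computation $(x,y)^{\cc}=(x,\varphi(x))$ driving (MS1)--(MS3) and condition (ii) of Definition~\ref{pr}, identification of $d_L=(1_M,0_D)$, and the two isomorphisms $x\mapsto(x,\varphi(x))$, $y\mapsto(1_M,y)$) is exactly the standard argument such a proof consists of, and every step checks out.
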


In \cite{H94} the second author introduced a concept of so-called $K_2$-triple. By a \emph{principal $K_2$-triple} is meant
a triple $(K, D, \varphi)$ in which $K$ is a Kleene algebra and $\varphi(K^{\wedge}) = \{0_D\}$. In \cite[Corollary 1]{H94} a restriction of
Theorem~\ref{PC} was proven for the principal $K_2$-triples which says that if the principal
MS-triple $(M,D,\varphi)$ is a principal $K_2$-triple, then the MS-algebra $L$ from Theorem~\ref{PC} is a principal $K_2$-algebra, that
is, a $K_2$-algebra with a principal filter $D(L)$. We now make a small addition to this result.

While Theorem~\ref{PC} shows that the filter $D(L)$ of the principal MS-algebra $L$ associated with a principal MS-triple $(M, D, \varphi)$
is isomorphic to $D$, we can consider and describe the larger filter $L^{\vee} = \{x\vee x^{\c}\mid x\in L\}$ of the principal $K_2$-algebra $L$ associated
with a principal $K_2$-triple $(K, D, \varphi)$. It is evident that  $L^{\vee} \supseteq D(L)$. And dually, we can describe the ideal
$L^{\wedge} = \{x\wedge x^{\c}\mid x\in L\}$ of the principal $K_2$-algebra $L$.

\begin{prop}\label{prop1}
Let $(K, D, \varphi)$ be a principal $K_2$-triple. Then for the
principal $K_2$-algebra $L$ obtained from it by the construction
given in Theorem~\ref{PC}, we have the following descriptions of its
filter $L^{\vee}$ and the ideal $L^{\wedge}$:
\begin{align}
&L^{\vee}= \{(x,y)\in L \mid x \in K^{\vee}\},\notag\\
&L^{\wedge}= \{(x,y)\in L \mid x \in K^{\wedge}\}.\notag
\end{align}
\end{prop}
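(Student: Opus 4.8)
The plan is to prove the two descriptions by straightforward double inclusion, exploiting the explicit formulas for the operations in $L$ given in Theorem~\ref{PC} together with the special structure of a $K_2$-triple, namely that $K$ is a Kleene algebra and $\varphi(K^{\wedge}) = \{0_D\}$. I will treat the filter $L^{\vee} = \{x \vee x^{\c} \mid x \in L\}$ first and obtain the ideal description $L^{\wedge}$ dually, or by a parallel computation. The whole proof should reduce to computing $(x,y) \vee (x,y)^{\c}$ and $(x,y) \wedge (x,y)^{\c}$ in coordinates and then recognising the first coordinate as lying in $K^{\vee}$ or $K^{\wedge}$.

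\textbf{The filter $L^{\vee}$.} First I would take an arbitrary $(x,y) \in L$ and compute its image under the map $z \mapsto z \vee z^{\c}$. Using $(x,y)^{\c} = (x^{\c}, \varphi(x^{\c}))$ and the join formula, we get
\[
(x,y) \vee (x,y)^{\c} = \bigl(x \vee x^{\c},\, y \vee \varphi(x^{\c})\bigr).
\]
The first coordinate is $x \vee x^{\c} \in K^{\vee}$, which already shows $L^{\vee} \subseteq \{(u,v) \in L \mid u \in K^{\vee}\}$. For the reverse inclusion I would start with any $(u,v) \in L$ satisfying $u \in K^{\vee}$, say $u = x \vee x^{\c}$ for some $x \in K$, and exhibit a preimage. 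The natural candidate is an element of the form $(u, w)$ for a suitable $w$; I would check that $(u,w) \vee (u,w)^{\c} = (u \vee u^{\c}, \ldots)$ collapses correctly, using that $u = u^{\cc}$ fails in general but $u^{\c} \le u$ holds because $u \in K^{\vee}$ is a join of the Kleene form (here the Kleene identity (MS5) forces the dense part to behave). The key point is that $u \vee u^{\c} = u$ when $u \in K^{\vee}$, so the first coordinate is already fixed and only the second coordinate must be matched.

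\textbf{The ideal $L^{\wedge}$ and the main obstacle.} Dually, computing
\[
(x,y) \wedge (x,y)^{\c} = \bigl(x \wedge x^{\c},\, y \wedge \varphi(x^{\c})\bigr),
\]
gives first coordinate $x \wedge x^{\c} \in K^{\wedge}$, yielding one inclusion immediately. \textbf{The hard part will be} the reverse inclusion for $L^{\wedge}$: given $(u,v) \in L$ with $u \in K^{\wedge}$, I must produce $(x,y) \in L$ with $x \wedge x^{\c} = u$ and $y \wedge \varphi(x^{\c}) = v$, and here the second-coordinate bookkeeping is delicate. This is exactly where the $K_2$-triple hypothesis $\varphi(K^{\wedge}) = \{0_D\}$ becomes essential: since $u \in K^{\wedge}$ we have $\varphi(u) = 0_D$, and the admissibility constraint $v \le \varphi(u) = 0_D$ forces $v = 0_D$, so the element $(u, 0_D)$ is the required form $z \wedge z^{\c}$ for an appropriate $z$. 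Verifying that such a $z$ exists and that $(u,0_D)$ indeed lies in $L$ (i.e. $0_D \le \varphi(u)$, which is automatic) is the crux; once the $\varphi(K^{\wedge}) = \{0_D\}$ condition is invoked, the remaining computations are routine coordinate checks using the lattice operations and the fact that in a Kleene algebra elements of $K^{\wedge}$ lie below every element of $K^{\vee}$.
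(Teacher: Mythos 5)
Your overall plan---double inclusion using the coordinate formulas of Theorem~\ref{PC}---is exactly the paper's route (the paper writes it as a chain of set equalities, and its one nontrivial inclusion is handled by the same substitution you need), and your treatment of $L^{\wedge}$ is essentially sound. The genuine gap is in the reverse inclusion for $L^{\vee}$. Given $(u,v)\in L$ with $u\in K^{\vee}$, you correctly propose the candidate $(u,w)$ and note that $u\vee u^{\c}=u$ fixes the first coordinate; but $(u,v)\vee(u,v)^{\c}=(u,\,v\vee\varphi(u^{\c}))$, so matching the second coordinate to $v$ requires $\varphi(u^{\c})\le v$, and since $v$ may be as small as $0_D$, what is really needed is $\varphi(u^{\c})=0_D$. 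This follows from $u^{\c}=(x\vee x^{\c})^{\c}=x\wedge x^{\c}\in K^{\wedge}$ together with the $K_2$-triple hypothesis $\varphi(K^{\wedge})=\{0_D\}$---the very hypothesis you explicitly reserve for the $L^{\wedge}$ half, attributing the collapse here instead to the Kleene identity (MS5) (``forces the dense part to behave''). That attribution is wrong: MS5 alone cannot do it. For instance, take $K=\{0,a,1\}$ the three-element Kleene algebra with $a^{\c}=a$, let $D=\{0,1\}$, and define $\varphi(0)=0$, $\varphi(a)=\varphi(1)=1$; this is a principal MS-triple with $K$ Kleene but not a $K_2$-triple, and there $(a,0)\in L$ has first coordinate $a\in K^{\vee}$, yet the only element of $L^{\vee}$ with first coordinate $a$ is $(a,1)$. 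So invoking $\varphi(K^{\wedge})=\{0_D\}$ in the $L^{\vee}$ direction is the one missing idea, and without it the step fails.

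A secondary slip: you write that ``$u=u^{\cc}$ fails in general,'' but $K$ is a Kleene algebra, hence a de Morgan algebra, so $u^{\cc}=u$ for every $u\in K$; indeed this involution (not MS5) is what gives $u^{\c}=x\wedge x^{\c}\le u$ and hence $u\vee u^{\c}=u$. With these two corrections---de Morgan involution for the first coordinate, the triple condition $\varphi(K^{\wedge})=\{0_D\}$ for the second---your double-inclusion argument closes and coincides with the paper's proof.
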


\begin{proof}
Let $(K, D, \varphi)$ be a principal $K_2$-triple. Using the
construction in Theorem~\ref{PC} we obtain  a principal
$K_2$-algebra $L = \{(x,y) \mid x\in K, y\in D, y \leq
\varphi(x)\}$. Now
\begin{align}
L^{\vee} &= \{(x,y)\vee (x,y)^{\c} \mid (x,y) \in L\} \notag \\
&=\{(x\vee x^{\c}, y \vee \varphi(x^{\c})) \mid x\in K, y\in D, y \leq \varphi(x)\} \notag \\
&=\{((x\vee x^{\c},y')  \mid x  \in K, y'\in D, y' \leq \varphi(x\vee x^{\c})\} \notag \\
&=\{(x',y') \in L \mid x' \in K^{\vee}\}. \notag
\end{align}
Dually one can derive the description of $L^{\wedge}$.
\end{proof}

By a principal $S$-triple we mean a principal $K_2$-triple $(B, D,
\varphi)$ where $B$ is a Boolean algebra.

\begin{cor}
Let $(B, D, \varphi)$ be a principal $S$-triple. Then the 
principal $K_2$-algebra $L$ from Theorem~\ref{PC} is a principal
Stone algebra with $L^{\vee}= D(L)$ and $L^{\wedge}= \{0_L\}$.
\end{cor}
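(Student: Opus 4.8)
The plan is to derive both identities directly from Proposition~\ref{prop1}, exploiting that the Kleene algebra underlying an $S$-triple is here Boolean. First I would record the two defining facts about the Boolean complement: for every $x\in B$ one has $x\wedge x^{\c}=0_B$ and $x\vee x^{\c}=1_B$. Consequently $B^{\wedge}=\{0_B\}$ and $B^{\vee}=\{1_B\}$; in particular $\varphi(B^{\wedge})=\{\varphi(0_B)\}=\{0_D\}$ holds automatically, so $(B,D,\varphi)$ is genuinely a principal $K_2$-triple and the construction of Theorem~\ref{PC} applies, yielding a principal $K_2$-algebra $L$.

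Next I would invoke Proposition~\ref{prop1}. For the filter it gives
$$
L^{\vee}=\{(x,y)\in L \mid x\in B^{\vee}\}=\{(1_B,y)\mid y\in D\},
$$
where the last equality uses $B^{\vee}=\{1_B\}$ together with the membership constraint $y\le\varphi(1_B)=1_D$, which imposes no restriction on $y$. Comparing this with the description $D(L)=\{(1_M,y)\mid y\in D\}$ from Theorem~\ref{PC} (here $M=B$), I obtain $L^{\vee}=D(L)$. For the ideal, Proposition~\ref{prop1} gives
$$
L^{\wedge}=\{(x,y)\in L \mid x\in B^{\wedge}\}=\{(0_B,y)\mid y\le\varphi(0_B)\};
$$
since $\varphi(0_B)=0_D$ forces $y=0_D$, this collapses to the single element $(0_B,0_D)=0_L$, that is, $L^{\wedge}=\{0_L\}$.

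Finally I would argue that $L$ is a principal Stone algebra. By Theorem~\ref{PC} the algebra $L$ is already a principal MS-algebra, so it remains only to verify the Stone identity $x\wedge x^{\c}=0$. But this is precisely the assertion that $L^{\wedge}=\{x\wedge x^{\c}\mid x\in L\}=\{0_L\}$, which has just been established. Hence $x\wedge x^{\c}=0_L$ for all $x\in L$, and $L$ is a (principal) Stone algebra.

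I do not expect a genuine obstacle here: the corollary is essentially the specialization of Proposition~\ref{prop1} to the Boolean case. The only point needing a moment's care is the bookkeeping of the second coordinate—checking that $B^{\vee}=\{1_B\}$ places no constraint on $y$, so that $L^{\vee}$ matches $D(L)$ exactly, while $B^{\wedge}=\{0_B\}$ forces the second coordinate down to $0_D$, so that $L^{\wedge}$ shrinks to $\{0_L\}$.
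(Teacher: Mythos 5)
Your proposal is correct, and its computational core --- obtaining $L^{\vee}=\{(1_B,y)\mid y\in D\}=D(L)$ and $L^{\wedge}=\{(0_B,0_D)\}=\{0_L\}$ from Proposition~\ref{prop1} together with $B^{\vee}=\{1_B\}$, $B^{\wedge}=\{0_B\}$ and the bound $y\le\varphi(x)$ --- is exactly what the paper does. The one genuine difference is how the Stone property is established. The paper runs a separate direct computation: $(x,y)^{\cc}\vee(x,y)^{\c}=(x\vee x^{\c},\varphi(x\vee x^{\c}))=(1,1)$, i.e.\ it verifies the identity $x^{\cc}\vee x^{\c}=1$ (the form of the Stone identity used in Section~\ref{Prel} to define $L_S$), which is equivalent to $x\wedge x^{\c}=0$ inside MS-algebras but is not literally the paper's stated definition of the Stone subvariety. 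You instead extract the Stone property for free from the already-proved equality $L^{\wedge}=\{0_L\}$, which says verbatim that $x\wedge x^{\c}=0_L$ for all $x\in L$ --- precisely the identity by which the paper defines Stone algebras. Your route is slightly more economical (no extra computation, no reliance on the equivalence of the two forms of the Stone identity), at the cost of making the Stone claim depend logically on Proposition~\ref{prop1}, whereas the paper's verification is self-contained; both are equally valid. Your side remark that the $K_2$-condition $\varphi(B^{\wedge})=\{0_D\}$ is automatic for Boolean $B$ is a correct and pleasant observation not made in the paper, though it is not needed, since a principal $S$-triple is by definition already a principal $K_2$-triple.
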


\begin{proof}
Let $(x,y)\in L$. Then we have
\begin{align}
(x,y)^{\cc} \vee (x,y)^{\c}&=(x,\varphi(x)) \vee (x^{\c}, \varphi(x^{\c})) \notag \\
&=(x \vee x^{\c},\varphi(x \vee x^{\c})) \notag \\
&=(1_M, 1_D), \notag
\end{align}
as $x\vee x^{\c} =1$. Hence $L$ is a Stone algebra. By
Proposition~\ref{prop1} we obtain
\begin{align}
L^{\vee} &= \{(x,y) \in L \mid x\in B^{\vee} \} \notag \\
&= \{(1_B,y) \in L \mid y\in D \} \notag \\
&=D(L). \notag
\end{align}
and
\begin{align}
L^{\wedge} &= \{(x,y) \in L \mid x\in B^{\wedge}\} \notag \\
 &= \{(0_B,y) \mid y\in D, y\le \varphi (0_B)\} \notag \\
&=\{(0_B, 0_D)\} 
= \{0_L\}.\notag
\end{align}
\end{proof}

\section{Congruence pairs of principal MS-algebras}\label{Pairs}

Let $L$ be a principal MS-algebra with a smallest dense element
$d_L$. For a congruence relation $\theta$ of $L$, let
$\theta_{L^{\cc}}$ and $\theta_{D(L)}$ denote the restrictions of
$\theta$ to $L^{\cc}$ and $D(L)=\left[ d_{L}\right)$, respectively.
Let $\Con(L^{\cc})$ and $\Con(D(L))$ denote the congruence lattices
of the de Morgan algebra $L^{\cc}$ and of the distributive lattice
$D(L)$, respectively.
 Obviously,
$(\theta_{L^{\cc}}, \theta_{D(L)}) \in \Con(L^{\cc}) \times
\Con(D(L))$.

\begin{df}\label{CP}
Let $L$ be a principal MS-algebra with a smallest dense element
$d_L$. A pair of congruences $(\theta_1, \theta_2) \in \Con(L^{\cc})
\times \Con(D(L))$ will be called an MS-congruence pair if
\begin{newlist}
\item[\rm(CP)]
$(a,b) \in \theta_1 \ \text{implies} \ (a \vee d_L, b\vee d_L) \in
\theta_2$.
\end{newlist}
\end{df}

We notice that in \cite{B85} R. Beazer defined the notion of a
congruence pair for the MS-algebras from the subvariety $K_2$ by two
conditions. More precisely, for a  $K_2$-algebra $L$ he called a
pair of congruences $(\theta_1, \theta_2) \in \Con(L^{\cc}) \times
\Con(L^\vee)$ a \emph{$K_2$-congruence pair} if
\begin{newlist}
\item[\rm(CP$_1$)] $(c,d) \in \theta_2 \ \text{implies} \ (c^{\c},d^{\c}) \in \theta_1$;
\item[\rm(CP$_2$)] $(a,b) \in \theta_1 \ \text{and}\ c\in L^\vee\ \text{imply} \ (a \vee c, b\vee c) \in \theta_2$.
\end{newlist}

One can show that for our MS-congruence pairs $(\theta_1,
\theta_2)$, the Beazer first condition (CP$_1$) is trivially
satisfied and so it can be removed from our definition: the reason
is that we consider the congruence $\theta_2$ on a smaller filter
$D(L)\subseteq L^\vee$.  It is also clear that for the principal
MS-algebras, the Beazer second condition (CP$_2$) restricted to the
filter $D(L)$ implies our condition (CP) since his condition is
quantified for all elements $c$ of the filter. Yet our simpler
condition (CP) is  equivalent to (CP$_2$) restricted to $D(L)$
because if $(\theta_1, \theta_2)\in \Con(L^{\cc}) \times \Con(D(L))$
is an MS-congruence pair, then $(a,b) \in \theta_1$
 implies $(a \vee d_L, b\vee d_L) \in \theta_2$, whence
obviously $(a\vee c,b\vee c) = (a \vee d_L\vee c, b\vee d_L\vee c)
\in \theta_2$ for all $c\in D(L)$.

Now we apply our definition of the MS-congruence pair to show that
the congruences of principal MS-algebras correspond to the
MS-congruence pairs.

\begin{thm}\label{thm3}
Let $L$ be a principal MS-algebra with a smallest dense element
$d_L$. For every congruence $\theta$ on $L$, the restrictions of
$\theta$ to $L^{\cc}$ and $D(L)=\left[ d_{L}\right)$ determine the
MS-congruence pair $(\theta_{L^{\cc}}, \theta_{D(L)})$.

Conversely, every MS-congruence pair $(\theta_1, \theta_2)$
uniquely determines a congruence $\theta$ on $L$ satisfying
 $\theta_{L^{\cc}} = \theta_1$ and $\theta_{D(L)} = \theta_2$. This congruence can be defined by the rule
$$
(x,y)\in \theta \quad  \text{if} \quad  (x^{\c}, y^{\c}) \in \theta_1 \
\text{and} \ (x \vee d_L, y\vee d_L) \in \theta_2.
$$
\end{thm}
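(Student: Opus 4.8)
The plan is to prove the two directions of the correspondence, with the forward direction being essentially routine and the converse requiring the real work.

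For the \textbf{forward direction}, I would take an arbitrary congruence $\theta$ on $L$ and verify that the pair $(\theta_{L^{\cc}}, \theta_{D(L)})$ satisfies condition (CP). The restrictions $\theta_{L^{\cc}} \in \Con(L^{\cc})$ and $\theta_{D(L)} \in \Con(D(L))$ automatically, since restricting a congruence to a subalgebra yields a congruence. To check (CP), suppose $(a,b) \in \theta_{L^{\cc}}$, so $a,b \in L^{\cc}$ and $(a,b) \in \theta$. Since $\theta$ is compatible with $\vee$ and $d_L \in L$, we get $(a \vee d_L, b \vee d_L) \in \theta$. As $a \vee d_L, b \vee d_L \in D(L)$ (because $a \vee d_L \geq d_L$ forces them into the principal filter $[d_L) = D(L)$), this gives $(a \vee d_L, b \vee d_L) \in \theta_{D(L)}$, which is exactly (CP).

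For the \textbf{converse direction}, given an MS-congruence pair $(\theta_1, \theta_2)$, I would define the relation $\theta$ on $L$ by the stated rule and then carry out four tasks in order. First, verify $\theta$ is an equivalence relation; reflexivity and symmetry are immediate, and transitivity follows because $\theta_1, \theta_2$ are transitive and the defining conditions are conjunctive. Second, verify $\theta$ is a congruence, i.e.\ compatible with $\vee$, $\wedge$, and $^{\c}$. The lattice operations should follow by distributing $^{\c}$, $\vee d_L$ over $\vee, \wedge$ and using that $\theta_1, \theta_2$ are lattice congruences; here the de Morgan laws (MS2) and the identity $(x \wedge y) \vee d_L = (x \vee d_L) \wedge (y \vee d_L)$ in the distributive lattice are the tools. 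For compatibility with $^{\c}$, if $(x,y) \in \theta$ then I must show $(x^{\c}, y^{\c}) \in \theta$, i.e.\ $(x^{\cc}, y^{\cc}) \in \theta_1$ and $(x^{\c} \vee d_L, y^{\c} \vee d_L) \in \theta_2$; the first follows from $(x^{\c}, y^{\c}) \in \theta_1$ by applying $^{\c}$ (a congruence on the de Morgan algebra $L^{\cc}$), while the second follows from $(x^{\cc}, y^{\cc}) \in \theta_1$ via condition (CP) together with the fact that $x^{\c} \in L^{\cc}$ satisfies $x^{\c} \vee d_L = x^{\ccc} \vee d_L = x^{\c\cc}{}^{\c}\!\vee d_L$, so (CP) applied to the pair $(x^{\c}, y^{\c})$ (which lies in $\theta_1$) yields what we need. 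Third, verify that $\theta$ restricts correctly: $\theta_{L^{\cc}} = \theta_1$ and $\theta_{D(L)} = \theta_2$; for elements of $L^{\cc}$ one uses $x = x^{\cc}$ so the first clause of the rule reads $(x,y) \in \theta_1$ and one checks the second clause is then automatic via (CP), and for dense elements $x^{\c} = 0$ so the first clause is vacuous and the rule reduces to $(x,y) \in \theta_2$. Fourth, establish uniqueness by observing that any congruence with the prescribed restrictions must agree with $\theta$; this is where the principality identity $x = x^{\cc} \wedge (x \vee d_L)$ from Definition~\ref{pr}(ii) is essential, since it lets one reconstruct the $\theta$-class of an arbitrary $x$ from the classes of its closed part $x^{\cc}$ and its dense part $x \vee d_L$.

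The \textbf{main obstacle} I anticipate is the compatibility of $\theta$ with the unary operation $^{\c}$, and, intertwined with it, the uniqueness argument. The subtlety in both is that the defining rule mixes information from two different substructures ($L^{\cc}$ via the closure $x^{\cc}$, and $D(L)$ via the join $x \vee d_L$), and one must show these two coordinates transform coherently; this is precisely the point where condition (CP) earns its keep, by guaranteeing that congruence data on the closed part propagates correctly to the dense part. For uniqueness, the key realization is that the principal MS-algebra identity decomposes every element as a meet of a closed element and a dense element, so that a congruence is determined by its action on $L^{\cc}$ and $D(L)$; I would make this explicit by writing $(x,y) \in \theta$ iff $(x^{\cc} \wedge (x \vee d_L),\, y^{\cc} \wedge (y \vee d_L)) \in \theta$ and splitting across the meet.
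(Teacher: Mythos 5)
Your proposal is correct and follows essentially the same route as the paper's proof: the forward direction by joining with $d_L$, the converse by checking that the rule-defined relation respects the lattice operations (via the de Morgan laws and distributivity), respects $^{\c}$ by applying (CP) to the pair $(x^{\c},y^{\c})\in\theta_1$, restricts correctly to $\theta_1$ and $\theta_2$, and is unique because the identity $x=x^{\cc}\wedge(x\vee d_L)$ lets one split any pair across the meet of its closed and dense parts. The only blemish is the momentary detour in your $^{\c}$-compatibility step (the claim that the second clause ``follows from $(x^{\cc},y^{\cc})\in\theta_1$''), but you immediately correct it to the right argument, which is exactly the paper's.
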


\begin{proof}
For every $\theta\in \Con(L)$, the restrictions of
$\theta$ to $L^{\cc}$ and $\left[ d_{L}\right)$ determine the
MS-congruence pair $(\theta_{L^{\cc}}, \theta_{D(L)})$
because  $(a,b) \in \theta_{L^{\cc}}$ gives $(a \vee d_L,
b\vee d_L) \in \theta$, thus $(a \vee d_L, b\vee d_L) \in
\theta_{D(L)}$.

Now let $(\theta_1, \theta_2)\in \Con(L^{\cc}) \times \Con(D(L))$ be
an MS-congruence pair and $\theta$ be the relation on $L$ defined by
the above rule. Clearly $\theta$ is an equivalence. To show that
$\theta$ is a congruence of $L$, let $(a,b) \in \theta$ and $(c,d)
\in \theta$. Then by the definition of~$\theta$, $(a^{\c},b^{\c}),
(c^{\c},d^{\c}) \in \theta_1 $ and $(a\vee d_L, b\vee d_L), (c\vee
d_L, d\vee d_L)\in \theta_2$. As $\theta_1\in \Con(L^{\cc})$, we get
$(a^{\c} \vee c^{\c}, b^{\c} \vee d^{\c}) \in \theta_1$, so $((a
\wedge c)^{\c}, (b \wedge d)^{\c}) \in~\theta_1$. Using the
distributivity of $L$  we have $((a \wedge c)\vee d_L, (b\wedge d)
\vee d_L) \in \theta_2$. This shows that $(a \wedge c, b\wedge d)
\in \theta$, so $\theta$ is preserved by the meet operation. To show
that $\theta$ is preserved by the join operation is immediate as the
congruence $\theta_1$ of $L^{\cc}$ contains with the pairs
$(a^{\c},b^{\c}), (c^{\c},d^{\c})$ also the pair $(a^{\c} \wedge
c^{\c},b^{\c} \wedge d^{\c})$, so $((a\vee c)^{\c}, (b\vee d)^{\c})
\in \theta_1$. And clearly $((a\vee c) \vee d_L, (b\vee d) \vee
d_L)\in \theta_2$. Hence $(a \vee c, b\vee d) \in \theta$.

Now let $(a,b)\in \theta$. Then $(a^{\c}, b^{\c})\in \theta_1$ and
$(a^{\cc}, b^{\cc}) \in \theta_1$. From $(a^{\c}, b^{\c})\in\theta_1$ we
have $(a^{\c}\vee d_L, b^{\c} \vee d_L)\in \theta_2$ by
Definition~\ref{CP}. Thus $(a^{\c}, b^{\c}) \in \theta$.

Next we will show that $\theta_{L^{\cc}} = \theta_1$ and
$\theta_{D(L)} = \theta_2$. To show $\theta_1 \leq
\theta_{L^{\cc}}$, let $a,b \in L^{\cc}$ and $(a,b) \in \theta_1$.
Then $(a^{\c}, b^{\c})\in \theta_1$ so $(a \vee d_L, b\vee d_L) \in
\theta_2$ as $(\theta_1, \theta_2)$ is an MS-congruence pair. Hence
$(a,b)\in \theta$ and $(a,b) \in \theta_{L^{\cc}}$. Conversely, let
$(a,b) \in \theta_{L^{\cc}}$, so $(a^{\c}, b^{\c}) \in \theta_1$.
Then $(a^{\cc}, b^{\cc}) \in \theta_1$, whence $(a,b) \in \theta_1$.

Now let $c,d \in D(L)$ and $(c,d) \in \theta_2$. Then $(c\vee d_L,
d\vee d_L)\in \theta_2$ and using the fact that $c^{\c} = d^{\c} = 0$ we
get $(c,d)\in \theta$, which means $(c,d) \in \theta_{D(L)}$.
Conversely, let $(c,d) \in \theta_{D(L)}$. Then $(c,d)\in \theta$
and since $c= c\vee d_L, d = d \vee d_L$, we obtain $(c,d)\in
\theta_2$. Hence $\theta_{D(L)} = \theta_2$.

To show the uniqueness of $\theta$, let $\theta$ and $\theta'$ be
congruences on $L$ such that $\theta_{L^{\cc}} =\theta'_{L^{\cc}} =
\theta_1$ and $\theta_{D(L)} = \theta'_{D(L)}= \theta_2$. Then
$(x,y) \in \theta$ gives $(x^{\cc}, y^{\cc}) \in \theta_{L^{\cc}}$ and
$(x\vee d_L, y \vee d_L) \in \theta_{D(L)}$ and also $(x^{\cc},
y^{\cc}) \in \theta'_{L^{\cc}}$ and $(x\vee d_L, y \vee d_L) \in
\theta'_{D(L)}$. Therefore $(x^{\cc} \wedge(x \vee d_L), y^{\cc}\wedge
(y\vee d_L))\in \theta'$. As $L$ is a principal MS-algebra with a
smallest dense element $d_L$, we get $(x, y) \in \theta'$. The
inclusion $\theta' \subseteq \theta$ can be shown analogously. Hence
$\theta = \theta'$.
\end{proof}

The set  of all MS-congruence pairs of $L$ will be denoted by $A(L)$ which is a traditional notation for the congruence pairs of algebras in the literature.

\begin{cor}
Let $L$ be a principal MS-algebra with a smallest dense element
$d_L$. The set $A(L)$ of MS-congruence pairs of $L$ forms a
sublattice of $\Con(L^{\cc}) \times \Con(D(L))$ and $\theta
\longmapsto (\theta_{L^{\cc}}, \theta_{D(L)})$ is an isomorphism
between $\Con(L)$ and $A(L)$.
\end{cor}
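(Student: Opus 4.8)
The plan is to derive everything from Theorem~\ref{thm3}, which already does the heavy lifting by exhibiting a bijective correspondence. Writing $\Phi(\theta) = (\theta_{L^{\cc}}, \theta_{D(L)})$, the first part of Theorem~\ref{thm3} shows that $\Phi$ maps $\Con(L)$ into $A(L)$, and the converse part shows that every MS-congruence pair is hit exactly once; hence $\Phi$ is a bijection from $\Con(L)$ onto $A(L)$, with inverse the reconstruction rule $(\theta_1,\theta_2) \mapsto \theta$. What then remains is to check that $A(L)$ is closed under the componentwise meet and join inherited from $\Con(L^{\cc}) \times \Con(D(L))$, so that it is a \emph{sublattice}, and that $\Phi$ respects these operations.

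First I would verify that $\Phi$ is an order isomorphism. That $\Phi$ is order-preserving is immediate, since $\theta \subseteq \theta'$ forces the restrictions to satisfy $\theta_{L^{\cc}} \subseteq \theta'_{L^{\cc}}$ and $\theta_{D(L)} \subseteq \theta'_{D(L)}$. For the reverse implication I would use the reconstruction rule of Theorem~\ref{thm3} as a genuine equivalence: for any congruence, $(x,y) \in \theta$ holds if and only if $(x\c, y\c) \in \theta_{L^{\cc}}$ and $(x \vee d_L, y \vee d_L) \in \theta_{D(L)}$. Thus if $\theta_{L^{\cc}} \subseteq \theta'_{L^{\cc}}$ and $\theta_{D(L)} \subseteq \theta'_{D(L)}$, then any $(x,y) \in \theta$ satisfies the two reconstructing conditions for $\theta'$ as well, giving $(x,y) \in \theta'$. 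Hence both $\Phi$ and $\Phi^{-1}$ are monotone, so $\Phi$ is an order isomorphism onto $A(L)$.

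Next I would show that $A(L)$ is a sublattice by checking closure under the two product operations. For meets this is routine: if $(\theta_1, \theta_2)$ and $(\theta_1', \theta_2')$ satisfy (CP), then from $(a,b) \in \theta_1 \cap \theta_1'$ condition (CP) applied in each coordinate yields that $(a \vee d_L, b \vee d_L)$ lies in both $\theta_2$ and $\theta_2'$, hence in $\theta_2 \cap \theta_2'$, so the componentwise meet again satisfies (CP). The join is the point requiring care, and is where I expect the main obstacle to sit: the join $\theta_1 \vee \theta_1'$ in $\Con(L^{\cc})$ is a transitive closure, so a pair $(a,b) \in \theta_1 \vee \theta_1'$ is witnessed by a finite chain $a = z_0, z_1, \ldots, z_n = b$ whose consecutive links lie alternately in $\theta_1$ and $\theta_1'$. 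Applying (CP) link by link gives $(z_i \vee d_L, z_{i+1} \vee d_L)$ in $\theta_2$ or $\theta_2'$ accordingly, and transitivity of $\theta_2 \vee \theta_2'$ then delivers $(a \vee d_L, b \vee d_L) \in \theta_2 \vee \theta_2'$. Thus the componentwise join also satisfies (CP), so $A(L)$ is closed under both operations and is a sublattice of $\Con(L^{\cc}) \times \Con(D(L))$.

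Finally I would assemble the pieces. Since $\Phi$ is an order isomorphism between the lattice $\Con(L)$ and the poset $A(L)$, and $A(L)$ has now been shown to be a sublattice, so that its induced-order meets and joins are precisely the componentwise operations, the standard fact that an order isomorphism between lattices automatically preserves meets and joins gives that $\Phi$ is a lattice isomorphism of $\Con(L)$ onto $A(L)$, completing the proof. As a sanity check one may verify directly that $\Phi$ preserves meets, using that restriction commutes with intersection, namely $(\theta \cap \theta')_{L^{\cc}} = \theta_{L^{\cc}} \cap \theta'_{L^{\cc}}$ and likewise over $D(L)$; the corresponding identity for joins is then forced by the order isomorphism together with the sublattice property just established.
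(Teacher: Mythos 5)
Your proof is correct and follows essentially the same route as the paper's: meet-closure of $A(L)$ is routine, join-closure is established by the same finite-chain argument applying (CP) link by link, and the isomorphism is then deduced from Theorem~\ref{thm3}. You are in fact slightly more careful at two points --- you correctly describe the witnessing chain as having consecutive links alternately in $\theta_1$ and $\theta_1'$ (where the paper's wording is imprecise), and you spell out, via the order-isomorphism argument and the equivalence form of the reconstruction rule, why the bijection from Theorem~\ref{thm3} preserves the lattice operations, a step the paper merely asserts.
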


\begin{proof}
Take $(\phi_1,\psi_1), (\phi_2, \psi_2) \in A(L)$. It is evident
that $(\phi_1 \wedge \phi_2, \psi_1 \wedge \psi_2) \in A(L)$. To
prove that $(\theta_1 \vee \psi_1, \theta_2 \vee \psi_2) \in A(L)$,
let $(a,b) \in \theta_1 \vee \psi_1$. Then there exists a finite
sequence $a = a_0, a_1, \ldots, a_n = b$ in $L^{\cc}$ with
$(a_{i-1}, a_i) \in \theta_1 \vee \psi_1$ for $i\in \{1, \ldots,
n\}$. By Definition~\ref{CP} we have $(a_{i-1}\vee d_L, a_i \vee
d_L) \in \theta_2 \vee \psi_2$. Hence the sequence
$$
a\vee d_L = a_0 \vee d_L, a_1 \vee d_L, \ldots, a_n \vee d_L = b\vee
d_L
$$
in $D(L)$ is witnessing $(a \vee d_L, b \vee d_L) \in \theta_2 \vee
\psi_2$. Thus $(\theta_1 \vee \psi_1, \theta_2 \vee \psi_2) \in
A(L)$ showing that $A(L)$ is a sublattice of $\Con(L^{\cc}) \times
\Con(D(L))$. From Theorem~\ref{thm3} it follows that the map
$\theta \longmapsto (\theta_{L^{\cc}}, \theta_{D(L)})$ is an
isomorphism.
\end{proof}

\section{\textbf Gr\"atzer's Problem and perfect extensions}\label{GP}

In this section we compare our study with that for (distributive)
p-algebras. For the following basic facts about p-algebras see
\cite{K80}. We recall that a $p$-algebra is an algebra
$(L,\vee,\wedge,^{\ast},0,1)$ with a bounded lattice reduct
$(L,\vee,\wedge,0,1)$ such that the unary operation of
pseudocomplementation $^{\ast}$ is defined on $L$ by $x\wedge a = 0$
if and only if  $x \leq a^\ast.$

The first proof that the class of all $p$-algebras is equational is
due to P.~Ribenboim  \cite{R49}. A $p$-algebra $L$ is said to be
distributive (modular) if the underlying lattice
$(L,\vee,\wedge,0,1)$ is distributive (modular). Further, a
$p$-algebra $L$ is called quasi-modular if it satisfies the identity
$((x\wedge y)\vee z^{\ast\ast})\wedge x=(x\wedge
y)\vee(z^{\ast\ast}\wedge x)$. It follows that it also satisfies the
identity $x=x^{\ast\ast}\wedge(x\vee x^{\ast})$. The class of all
quasi-modular p-algebras contains the class of all modular
$p$-algebras.  Finally, if in a $p$-algebra $L$ the Stone identity $
x^{\ast}\vee x^{\ast\ast}=1 $ is satisfied, then it is said to be an
$S$-algebra. In the distributive case the $S$-algebras coincide with
the \emph{Stone algebras}.

For a $p$-algebra $L$ the subset $B(L)= \{a\in L \mid
a=a^{\ast\ast}\}$ of \emph{closed} elements is a Boolean algebra
$(B(L),\sqcup, \wedge, 0, 1)$
 with $a\sqcup b =(a^{\ast}\wedge b^{\ast})^{\ast}$ and the subset $D(L)= \{a\in L \mid
d^{\ast}=0\}$ of \emph{dense} elements of $L$ is a
filter of $L$.

Gr\"atzer's problem from his book~\cite{G71} (Problem 57) for
distributive p-algebras was formulated at the end of
Section~\ref{Intro}.

We present an analogy of Gr\"atzer's problem  for the principal
MS-algebras here. However, unlike Gr\"atzer's formulation of
\cite[Problem 57]{G71} for distributive p-algebras and its solution
by Katri\v n\'ak in \cite{K76}, we do not apply the traditional
approach of identifying isomorphic objects: in Gr\"atzer's problem
and Katri\v n\'ak's solution these were $B(L)$ and $B$, $D(L)$ and
$D$, and the sets $A$ and $A(L)$ of congruence pairs. In our
approach below we take into account the isomorphisms between them.

For a homomorphism of  algebras
$\tau: A \to B$ and a congruence $\theta \in \Con (A)$, the
congruence on $B$ generated by all pairs $(\tau (x), \tau (y))$ with
$(x, y)\in \theta$ will be denoted by $\Con
(\tau)(\theta)$. It is well known (and easy to show) that the
mapping $\Con(\tau): \Con(A) \to \Con(B)$ preserves arbitrary joins.

Now let $M$ be a de Morgan algebra, let $D$ be a bounded
distributive lattice and let $A$ be a subset of $\Con(M)\times
\Con(D)$. To represent $A$ as the set $A(L)$ of the congruence pairs of some principal
$MS$-algebra $L$, we seek for $L$ and isomorphisms $\tau_1: M\to
L^{\cc}$ and $\tau_2: D\to D(L)$ such that
\[
A=\{(\theta_{1},\theta_2) \mid (\Con (\tau_1)(\theta_1),\Con(\tau_2)(\theta_2))\in A(L)\} \tag{*}.
\]
Then $A\subseteq \Con(M)\times \Con(D)$ is called \emph{representable}.

Now we present
an analogy of Gr\"atzer's problem  for the principal
MS-algebras.

\begin{pr1}\label{prob1}
Let $M$ be a de Morgan algebra, let $D$ be a bounded distributive
lattice, and let $A$ be a sublattice of $\Con(M)\times \Con(D)$.
Under what conditions does there exist a principal MS-algebra $L$
and isomorphisms $\tau_1: M\to L^{\cc}$ and $\tau_2: D\to D(L)$ such that $A$ is representable?
\end{pr1}

A basic answer is the following.

\begin{thm}\label{thm:prob0}
Let $M$ be a de Morgan algebra, let $D$ be a bounded distributive
lattice, and let $A$ be a sublattice of $\Con(M)\times \Con(D)$.
Then  $A$ is representable if and only if
there exists a lattice $(0,1)$-homomorphism $\varphi: M\to D$ such that for every  $\theta_1\in \Con(M)$ and every $\theta_{2}\in \Con(D)$
\[
(\theta_{1},\theta_2) \in A \iff \Con (\varphi)(\theta_1) \subseteq \theta_2.\tag{**}
\]
\end{thm}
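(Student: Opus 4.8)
The plan is to reduce everything to one clean description of the set $A(L)$ of MS-congruence pairs and then transport it across the isomorphisms $\tau_1,\tau_2$. The starting observation is that for any principal MS-algebra $L$ with smallest dense element $d_L$, the map $\varphi(L)\colon L^{\cc}\to D(L)$, $\varphi(L)(a)=a\vee d_L$, is a $(0,1)$-lattice homomorphism (recorded in the Preliminaries), and condition (CP) of Definition~\ref{CP} says precisely that each generating pair $(\varphi(L)(a),\varphi(L)(b))$ with $(a,b)\in\theta_1$ lies in $\theta_2$. Since $\theta_2$ is already a congruence, this is equivalent to requiring that the congruence generated by those pairs be contained in $\theta_2$. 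Hence, by the very definition of $A(L)$,
\[
A(L)=\{(\theta_1,\theta_2)\in\Con(L^{\cc})\times\Con(D(L))\mid \Con(\varphi(L))(\theta_1)\subseteq\theta_2\}.
\]

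Next I would record the functoriality of $\Con$: for homomorphisms $f\colon A\to B$ and $g\colon B\to C$ one has $\Con(g\circ f)=\Con(g)\circ\Con(f)$. This is verified on principal congruences, where $\Con(f)(\Con_A(x,y))=\Con_B(f(x),f(y))$, and then extended using the join-preservation of $\Con(\cdot)$ noted in the text, since every congruence is the join of the principal congruences it contains. In particular, if $\tau$ is an isomorphism then $\Con(\tau)$ is a lattice isomorphism with inverse $\Con(\tau^{-1})$, hence order-reflecting: $\Con(\tau)(\alpha)\subseteq\Con(\tau)(\beta)$ iff $\alpha\subseteq\beta$.

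The connecting identity is $\varphi(L)\circ\tau_1=\tau_2\circ\varphi$. In the ``only if'' direction I set $\varphi:=\tau_2^{-1}\circ\varphi(L)\circ\tau_1$, a $(0,1)$-lattice homomorphism $M\to D$, so the identity holds by construction. In the ``if'' direction I take for $L$ the algebra produced from the triple $(M,D,\varphi)$ by Theorem~\ref{PC}, with $\tau_1(x)=(x,\varphi(x))$, $\tau_2(y)=(1_M,y)$, and $d_L=(1_M,0_D)$; a one-line computation gives $\varphi(L)(\tau_1(x))=(1_M,\varphi(x))=\tau_2(\varphi(x))$. Applying $\Con$ and using functoriality yields $\Con(\varphi(L))\circ\Con(\tau_1)=\Con(\tau_2)\circ\Con(\varphi)$. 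Combining this with the description of $A(L)$ above, for any $(\theta_1,\theta_2)\in\Con(M)\times\Con(D)$ the condition $(\Con(\tau_1)(\theta_1),\Con(\tau_2)(\theta_2))\in A(L)$ reads $\Con(\tau_2)(\Con(\varphi)(\theta_1))\subseteq\Con(\tau_2)(\theta_2)$, which by order-reflection of $\Con(\tau_2)$ is equivalent to $\Con(\varphi)(\theta_1)\subseteq\theta_2$. Thus
\[
\{(\theta_1,\theta_2)\mid(\Con(\tau_1)(\theta_1),\Con(\tau_2)(\theta_2))\in A(L)\}=\{(\theta_1,\theta_2)\mid\Con(\varphi)(\theta_1)\subseteq\theta_2\}.
\]
Reading the left-hand side as the defining condition (*) of representability then proves both implications at once: in the ``only if'' direction (*) gives (**) with the constructed $\varphi$, and in the ``if'' direction (**) together with the constructed $L,\tau_1,\tau_2$ gives (*).

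I expect the main obstacle to be bookkeeping rather than a genuine difficulty: one must be sure that the functor $\Con$ behaves well enough for the naturality square $\Con(\varphi(L))\circ\Con(\tau_1)=\Con(\tau_2)\circ\Con(\varphi)$ to hold on the nose, and that cancelling the isomorphism $\Con(\tau_2)$ is legitimate, which requires order-reflection and not merely monotonicity. The only other point needing care is the verification that $d_L=(1_M,0_D)$ is indeed the smallest dense element of the algebra $L$ of Theorem~\ref{PC}, so that $\varphi(L)$ is the intended map and the connecting identity genuinely holds.
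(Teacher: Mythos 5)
Your proof is correct, and at the level of constructions it matches the paper's: in the ``if'' direction both proofs take $L$ to be the triple algebra of Theorem~\ref{PC} over $(M,D,\varphi)$ with $\tau_1(x)=(x,\varphi(x))$ and $\tau_2(y)=(1_M,y)$, and in the ``only if'' direction both obtain $\varphi$ by conjugating a connecting homomorphism with the given isomorphisms. Where you genuinely diverge is in how the equivalence of (*) and (**) is established. The paper first normalizes: it assumes (implicitly invoking the representation theorem of \cite{BGH11}) that $L$ itself is the triple algebra of some $(M,D,\psi)$, writes $\tau_1,\tau_2$ through automorphisms $\sigma_1,\sigma_2$ of $M$ and $D$, sets $\varphi=\sigma_2^{-1}\psi\sigma_1$, and then leaves both verifications as ``not difficult to check''. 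You instead work intrinsically: the lemma $A(L)=\{(\theta_1,\theta_2)\mid \Con(\varphi(L))(\theta_1)\subseteq\theta_2\}$ (which is condition (CP) of Definition~\ref{CP} restated, legitimate because $\theta_2$ is itself a congruence), the functoriality identity $\Con(g\circ f)=\Con(g)\circ\Con(f)$, and order-reflection of the isomorphism $\Con(\tau_2)$ let you prove both implications by a single chain of equivalences, with $\varphi:=\tau_2^{-1}\circ\varphi(L)\circ\tau_1$ in the forward direction. This buys two things: the forward direction never needs the ``we can assume $L$ is given by a triple'' step (Theorem~\ref{PC} is used only in the converse), and the checks the paper omits are made explicit --- including the naturality square $\Con(\varphi(L))\circ\Con(\tau_1)=\Con(\tau_2)\circ\Con(\varphi)$, which is unproblematic here precisely because the maps being cancelled are isomorphisms. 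The two constructions also produce the same homomorphism: in the paper's normalized setting $\varphi(L)$ sends $(x,\psi(x))$ to $(1_M,\psi(x))$, so $\tau_2^{-1}\circ\varphi(L)\circ\tau_1=\sigma_2^{-1}\psi\sigma_1$.
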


\begin{proof} First, let $A$ be representable. So, there exists a principal MS-algebra $L$
and isomorphisms $\tau_1:\ M\to L^{\cc}$ and $\tau_2:\ D\to D(L)$ such that (*) is satisfied.
We can assume that  the MS-algebra $L$ is given by the triple $(M,D,\psi)$, so
$L^{\cc}=\{(x,\psi(x))\mid x\in M\}$, $D(L)=\{(1_M,y)\mid y\in D\}$, $d_L=(1_M,0_D)$.
The isomorphisms $\tau_1$, $\tau_2$ must have the form
$$\tau_1(x)=(\sigma_1(x),\psi(\sigma_1(x)),$$
$$\tau_2(y)=(1_M,\sigma_2(y)),$$
for some automorphisms $\sigma_1$ and $\sigma_2$ on $M$ and $D$, respectively.
We set $\varphi=\sigma_2^{-1}\psi\sigma_1$. It is not difficult to check (**).

Conversely, let $\varphi:\ M\to D$ satisfy (**). Let $L$ be the MS-algebra determined by the triple
$(M, D,\varphi)$. We define the isomorphisms $\tau_i$ by $\tau_1(x)=(x,\varphi(x))$, $\tau_2(y)=(1_M,y)$.
Then it is not difficult to check that (*) holds.
\end{proof}

Now we present a modification of the above theorem, where only principal congruences on $M$ are needed.

\begin{thm}\label{thm:prob1}
Let $M$ be a de Morgan algebra, let $D$ be a bounded distributive
lattice, and let $A$ be a sublattice of $\Con(M)\times \Con(D)$.

Then  $A$ is representable if and only if the following conditions are satisfied:
\begin{newlist}
\item[\rm(1)] $A$ is join-closed;
\item[\rm(2)] $A$ is down-closed in first coordinate, that is, $(\theta_1,\theta_2)\in A$ and $\alpha\leq\theta_{1}$ imply $(\alpha,\theta_{2})\in A$;
\item[\rm(3)] there exists a lattice $(0,1)$-homomorphism $\varphi: M\to D$ such that for every principal congruence $\theta_1\in \Con(M)$ and every $\theta_{2}\in \Con(D)$
\[
(\theta_{1},\theta_2) \in A \iff \Con (\varphi)(\theta_1) \subseteq \theta_2.
\]
\end{newlist}
\end{thm}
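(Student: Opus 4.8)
The plan is to deduce Theorem~\ref{thm:prob1} from Theorem~\ref{thm:prob0} by showing that conditions (1)--(3) taken together are equivalent to the existence of a lattice $(0,1)$-homomorphism $\varphi\colon M\to D$ satisfying (**) for \emph{all} $\theta_1\in\Con(M)$. The whole argument rests on two standard facts: first, that in any algebra every congruence is the join of the principal congruences it contains, so that $\theta_1=\bigvee\{\Theta(a,b)\mid (a,b)\in\theta_1\}$, where $\Theta(a,b)$ denotes the principal congruence of $M$ generated by $(a,b)$; and second, that $\Con(\varphi)$ preserves arbitrary joins (recalled in the excerpt) and is therefore monotone. The role of (1) and (2) is precisely to bridge the gap between the biconditional in (3), assumed only for principal $\theta_1$, and the biconditional (**), which must hold for all $\theta_1$.

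For the forward implication I would assume $A$ is representable and take the homomorphism $\varphi$ provided by Theorem~\ref{thm:prob0}, so that (**) holds for every $\theta_1$. Condition (3) is then immediate, being the restriction of (**) to principal congruences. For (1), given pairs $(\theta_1^i,\theta_2^i)\in A$, join-preservation gives $\Con(\varphi)(\bigvee_i\theta_1^i)=\bigvee_i\Con(\varphi)(\theta_1^i)\subseteq\bigvee_i\theta_2^i$, so $(\bigvee_i\theta_1^i,\bigvee_i\theta_2^i)\in A$ by (**); thus $A$ is join-closed. For (2), if $(\theta_1,\theta_2)\in A$ and $\alpha\le\theta_1$, monotonicity yields $\Con(\varphi)(\alpha)\subseteq\Con(\varphi)(\theta_1)\subseteq\theta_2$, so $(\alpha,\theta_2)\in A$.

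For the converse I would fix the homomorphism $\varphi$ given by (3) and verify (**) for an arbitrary $\theta_1$, after which Theorem~\ref{thm:prob0} finishes the proof. It is convenient to fix $\theta_2$ and compare the two ``slices'' $\{\theta_1\mid(\theta_1,\theta_2)\in A\}$ and $\{\theta_1\mid\Con(\varphi)(\theta_1)\subseteq\theta_2\}$. For the inclusion ``$\Rightarrow$'': if $(\theta_1,\theta_2)\in A$, then by (2) every principal $\pi\le\theta_1$ satisfies $(\pi,\theta_2)\in A$, hence $\Con(\varphi)(\pi)\subseteq\theta_2$ by (3); taking the join over all such $\pi$ and using join-preservation gives $\Con(\varphi)(\theta_1)\subseteq\theta_2$. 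For the inclusion ``$\Leftarrow$'': if $\Con(\varphi)(\theta_1)\subseteq\theta_2$, then each principal $\pi\le\theta_1$ has $\Con(\varphi)(\pi)\subseteq\theta_2$ by monotonicity, so $(\pi,\theta_2)\in A$ by (3); writing $\theta_1$ as the join of these principal congruences and invoking join-closure (1) yields $(\theta_1,\theta_2)\in A$.

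The step I expect to be the main obstacle is the ``$\Leftarrow$'' inclusion just above, because $\theta_1$ need not be a finite join of principal congruences. Finite join-closure (which already follows from $A$ being a sublattice) is not enough here; it is exactly the arbitrary-join closure asserted in (1) that is needed, and this is the reason condition (1) cannot be dropped. I would therefore read (1) as closure under arbitrary joins in the complete lattice $\Con(M)\times\Con(D)$, and I would make the write-up state explicitly that every congruence of $M$ is the join of the principal congruences contained in it, so that the passage to the possibly infinite join is fully justified.
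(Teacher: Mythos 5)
Your proof is correct and takes essentially the same route as the paper's: both directions reduce to Theorem~\ref{thm:prob0}, writing an arbitrary $\theta_1\in\Con(M)$ as a (possibly infinite) join of principal congruences and then running the chain of equivalences via (2), (3), (1) and the join-preservation of $\Con(\varphi)$. Your explicit observation that (1) must be read as closure under \emph{arbitrary} joins (finite join-closure being automatic for a sublattice) is a sound clarification of a point the paper leaves implicit.
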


\begin{proof} The necessity of the conditions (1)-(3) for representability of $A$ is clear. For the converse assume that the conditions (1)-(3) are
satisfied. Every congruence $\theta_1\in \Con(M)$ is a join of
principal congruences $\alpha_i\in \Con(M)$ where $i\in I$ for some
set $I$. Since the mapping $\Con(\varphi)$  preserves joins, we
obtain
\begin{align}
(\theta_{1},\theta_2) \in A  &\iff (\forall i \in I)\ (\alpha_i,\theta_2) \in A\notag\\
&\iff (\forall i \in I)\ \Con(\varphi)(\alpha_i) \subseteq \theta_2\notag\\
&\iff \Con (\varphi)(\theta_1) \subseteq \theta_2.\notag
\end{align}
Hence $A$ is representable.
\end{proof}

As a step to achieve a more descriptive solution of Problem 1, for the rest of this section
we consider a special
case when a principal MS-algebra $L$ is a so-called perfect extension of its
greatest Stone subalgebra $L_{S}$.
 In our first theorem below we are
able to reduce such a condition to the usual simpler substructures
$L^{\cc}$ and $D(L)$ of $L$.

It is appropriate to recall here some concepts.  An algebra $A$
satisfies the \emph{Congruence Extension Property} (briefly ({CEP}))
if for every subalgebra $B$ of $A$ and every congruence $\theta$ of
$B$, $\theta$ extends to a congruence of $A$. An algebra $A$ is said
to be a \emph{perfect extension} of its subalgebra $B$, if every
congruence of $B$ has a unique extension to $A$ (see \cite[page
30]{BV3}). We notice that in the literature such $A$ is sometimes
called a \emph{congruence-preserving extension} of $B$ (we e.g.
refer to \cite{GW} by Gr\"atzer and Wehrung where this concept is used in case of lattices).
Of course, if $A$ is a perfect extension of $B$, then $\Con(A) \cong
\Con(B)$.

The classes of distributive lattices and of MS-algebras are known to
satisfy the (CEP) \cite{G71}, \cite{BV3}. However, very little seems
to be known when  in these or other classes of algebras, an algebra
$A$ is a perfect extension of its subalgebra $B$. In \cite{GW},
Gr\"atzer and Wehrung proved that every lattice with more than one
element has a proper congruence-preserving (i.e. perfect) extension. In
\cite[Theorem 2.13]{BV3} it is shown that for any Ockham algebra $O$
with a fixed point $a$, $O$ is a perfect extension of its subalgebra
$C_a$ which is the cone generated by $a$:
$$
C_a := \downarrow a\ \cup \uparrow a = \{x\in O\mid x\leq a\}\cup
\{x\in O\mid x\geq a\}.
$$
Consequently, $\Con(A)\cong \Con(C_a)$.
For an Ockham algebra $O$ its subalgebra $C_a$  always satisfies the
axiom (MS5) and the size of $C_a$ can in general be much smaller
than that of $O$ (see \cite[page 30]{BV3}).

\begin{rem}\label{Belo}
We notice that in \cite{V} it was shown that in a modular lattice $L$, the
cone $C_a$ generated by a (where $a\in L$ is an arbitrary element) is a sublattice with the property
that every congruence of $C_a$ has at most one extension to a congruence of $L$.

The theory of modular lattices was the first research topic of
Beloslav Rie\v can to whose memory this paper is dedicated. Under
the guidance of his young professor Milan Kolibiar, in 1957 as a
student Belo proved that the axiomatic of modular lattices can be
built by using only two axioms together with associativity. The
paper~\cite{R58} where this result was published was the first
scientific paper of professor Beloslav Rie\v can.
\end{rem}

\begin{thm}
Let $L'$ be a subalgebra of a principal MS-algebra $L$. Then $L$ is
a perfect extension of $L'$ if and only if
\begin{newlist}
\item[\rm(1)]  $D(L)$ is a perfect extension of $D(L')$ and
\item[\rm(2)]  $L^{\circ\circ}$ is a perfect extension of
$(L')^{\circ\circ}$.
\end{newlist}
\end{thm}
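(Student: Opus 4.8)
The plan is to recast the entire statement in terms of restriction maps between congruence lattices and then exploit the description of $\Con(L)$ by MS-congruence pairs from Theorem~\ref{thm3}. First I would record that all four algebras in sight --- $L$ and $L'$, the de Morgan algebras $L\cc$ and $(L')\cc$, and the distributive lattices $D(L)$ and $D(L')$ --- lie in classes enjoying the (CEP): distributive lattices and MS-algebras satisfy it, and a de Morgan algebra is an MS-algebra. Consequently the three restriction homomorphisms
\begin{gather*}
r\colon \Con(L)\to\Con(L'),\\
\rho_1\colon\Con(L\cc)\to\Con((L')\cc),\\
\rho_2\colon\Con(D(L))\to\Con(D(L'))
\end{gather*}
are all surjective, so in each case ``being a perfect extension'' is equivalent to the corresponding restriction map being injective. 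Thus the theorem reduces to the single assertion that $r$ is injective if and only if both $\rho_1$ and $\rho_2$ are injective.

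The link between the three maps is the correspondence $\theta\mapsto(\theta_{L\cc},\theta_{D(L)})$ of Theorem~\ref{thm3}. Since $(L')\cc=L'\cap L\cc$ and $D(L')=L'\cap D(L)$, restricting a congruence twice commutes: for $\theta\in\Con(L)$ one has
\[
r(\theta)\cap ((L')\cc)^2=\rho_1(\theta_{L\cc})\quad\text{and}\quad r(\theta)\cap D(L')^2=\rho_2(\theta_{D(L)}).
\]
This gives the easy implication at once. Assuming $\rho_1$ and $\rho_2$ injective and supposing $r(\theta)=r(\theta')$, I intersect with $((L')\cc)^2$ and with $D(L')^2$ and use the displayed identities to get $\rho_1(\theta_{L\cc})=\rho_1(\theta'_{L\cc})$ and $\rho_2(\theta_{D(L)})=\rho_2(\theta'_{D(L)})$, whence $\theta_{L\cc}=\theta'_{L\cc}$ and $\theta_{D(L)}=\theta'_{D(L)}$; the injectivity of the pairing map of Theorem~\ref{thm3} then forces $\theta=\theta'$. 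Hence conditions (1) and (2) imply that $L$ is a perfect extension of $L'$.

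For the converse I would argue contrapositively, producing from a failure of injectivity of $\rho_1$ or of $\rho_2$ two distinct congruences of $L$ with equal restriction to $L'$. If $\rho_1$ is not injective, pick $\theta_1\neq\theta_1'$ in $\Con(L\cc)$ with $\rho_1(\theta_1)=\rho_1(\theta_1')$ and lift them to the MS-congruence pairs $(\theta_1,\nabla)$ and $(\theta_1',\nabla)$, which are legitimate because (CP) is vacuous when the second coordinate is the full congruence. The corresponding congruences of $L$ are distinct, but since $x\c,y\c\in(L')\cc$ whenever $x,y\in L'$, the defining rule of Theorem~\ref{thm3} shows that their restrictions to $L'$ are governed solely by $\rho_1(\theta_1)=\rho_1(\theta_1')$ and therefore coincide, contradicting injectivity of $r$.

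The remaining case --- deducing injectivity of $\rho_2$ from injectivity of $r$ --- is the step I expect to be the main obstacle. The natural move is to lift $\theta_2\neq\theta_2'$ agreeing on $D(L')$ to the pairs $(\Delta,\theta_2)$ and $(\Delta,\theta_2')$ and to show they restrict equally to $L'$; but the defining rule tests such a pair on $L'$ through the elements $x\vee d_L$ with $x\in L'$, and here the difficulty is that $d_L$ need not lie in $L'$, so these witnesses need not belong to $D(L')$ and need not be controlled by the hypothesis $\rho_2(\theta_2)=\rho_2(\theta_2')$. The crux is therefore to understand, for a general subalgebra $L'$ (which need not itself be a principal MS-algebra, so the pairing of Theorem~\ref{thm3} is unavailable for $L'$), exactly which pairs of $D(L)$ are \emph{seen} by $L'$. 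I would isolate this by analysing the map $x\mapsto x\vee d_L$ on $L'$ and its interaction with the smallest dense element of $L'$, and it is on this delicate point --- in particular on whether $d_L$ is forced into $L'$ --- that the careful work of the proof would be concentrated.
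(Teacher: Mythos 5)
Your completed steps are correct, and they follow the same route as the paper: both arguments funnel everything through the congruence pairs of Theorem~\ref{thm3}, lifting congruences of $L^{\circ\circ}$ to pairs $(\phi,\nabla_{D(L)})$ and congruences of $D(L)$ to pairs $(\triangle_{L^{\circ\circ}},\psi)$. Your reduction to injectivity of the three restriction maps, your proof that (1) and (2) imply perfectness, and your proof that perfectness forces injectivity of $\rho_1$ all work; in fact the $\rho_1$ argument is handled more carefully than in the paper, since you verify agreement of the two lifted congruences on $L'$ directly from the defining rule, using only that $x^{\circ}\in(L')^{\circ\circ}$ for $x\in L'$, which is valid for an arbitrary subalgebra.

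The step you leave open --- perfectness implies injectivity of $\rho_2$ --- is a genuine gap in your proposal, but your diagnosis of why it is hard is exactly right, and it is precisely the point where the paper's own proof is unsound. The paper concludes that the lifted congruences $\theta_1,\theta_2$ agree on $L'$ \emph{because} their restriction pairs $(\triangle_{(L')^{\circ\circ}},\psi_i\restriction D(L'))$ agree; that inference is the uniqueness half of Theorem~\ref{thm3} applied to $L'$, which is legitimate only when $L'$ is itself a principal MS-algebra. For a general subalgebra the step is not merely difficult but false, and so is the implication. Let $M$ be the Kleene chain $0<k<1$ with $k^{\circ}=k$, let $D$ be the chain $0<p<1$, let $\varphi(0)=0$ and $\varphi(k)=\varphi(1)=1$, and let $L$ be the principal MS-algebra of Theorem~\ref{PC}; put $L'=\{(0,0),(k,0),(k,p),(k,1),(1,1)\}$, a subalgebra (it is a chain, and $(k,e)^{\circ}=(k,1)$) which misses $d_L=(1,0)$. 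Any congruence of $L$ relating two elements with different first coordinates collapses, after applying $\wedge$, $\vee$ and $^{\circ}$, the pair $((0,0),(1,1))$ and hence is $\nabla$; every other congruence is a lattice congruence of $D$ acting simultaneously on the chains $\{(k,e)\}$ and $\{(1,e)\}$. Since the first of these chains lies inside $L'$, restriction $\Con(L)\to\Con(L')$ is injective, and a short check shows it is a bijection between two five-element congruence lattices, so $L$ is a perfect extension of $L'$ (moreover $(L')^{\circ\circ}=L^{\circ\circ}$, so (2) holds). Yet $D(L')=\{(1,1)\}$ while the three-element chain $D(L)$ has four congruences, so (1) fails.

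So neither your argument nor the paper's can be completed at the stated level of generality: the theorem needs the extra hypothesis that $L'$ is itself a principal MS-algebra (for instance $d_L\in L'$, which does hold in the paper's intended application $L'=L_S$ of Corollary~\ref{5.2}). Under that hypothesis your missing step closes exactly as the paper closes it: Theorem~\ref{thm3} applies to $L'$, equal restriction pairs force equal restrictions to $L'$, and perfectness then gives $\psi_1=\psi_2$.
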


\begin{proof}
Let $L$ be a perfect extension of $L'$. Let $\psi\in \Con(D(L'))$.
As the (CEP) holds for the class of distributive lattices, we only
have to verify that $\psi$ has a unique extension to a congruence of
$D(L)$. Let $\psi_1,\psi_2\in \Con(D(L))$ such that
$\psi_1\restriction D(L')=\psi_2\restriction D(L')=\psi$. Clearly
$(\triangle_{L^{\circ\circ}},\psi_1),(\triangle_{L^{\circ\circ}},\psi_2)\in
A(L)$. Using Theorem~\ref{thm3}, there exist $\theta_1$ and
$\theta_2$ of $\Con(L)$ corresponding to
$(\triangle_{L^{\circ\circ}},\psi_1)$ and
$(\triangle_{L^{\circ\circ}},\psi_2)$, respectively. Since
$(\triangle_{(L')^{\circ\circ}},\psi_1\restriction D(L')),
(\triangle_{(L')^{\circ\circ}},\psi_2\restriction D(L'))$ in $A(L')$
corresponding to the congruences $\theta_1\restriction  L',
\theta_2\restriction L'\in \Con(L')$ are the same,  we obtain
$\theta_1\restriction  L' = \theta_2\restriction L'$. As $L$ is a
perfect extension of $L'$, it follows $\theta_1=\theta_2$. Hence
$\psi_1=\psi_2$ proving (1). Now we show that $L^{\circ\circ}$ is a
perfect extension of $(L')^{\circ\circ}$. Let $\phi\in
\Con((L')^{\circ\circ})$. Then $\phi$ has an extension to a
congruence of $L^{\circ\circ}$ by the (CEP). To show that this
extension is unique, let $\phi_1,\phi_2\in \Con(L^{\circ\circ})$
with $\phi_1\restriction  (L')^{\circ\circ}=\phi_2\restriction
(L')^{\circ\circ}=\phi$. Clearly $(\phi_1,\nabla_{D(L)}),
(\phi_2,\nabla_{D(L)})\in A(L)$.  As above, by Theorem~\ref{thm3},
there exist $\theta_1$ and $\theta_2$ of $\Con(L)$ corresponding to
$(\phi_1,\nabla_{D(L)})$ and $(\phi_2,\nabla_{D(L)})$, respectively.
Since the pairs $(\phi_1\restriction
(L')^{\circ\circ},\nabla_{D(L)})$ and $(\phi_\restriction
(L')^{\circ\circ}2,\nabla_{D(L)})$  corresponding to the congruences
$\theta_1\restriction  L', \theta_2\restriction L'$ of $L'$ are the
same,  we again obtain $\theta_1\restriction  L' =
\theta_2\restriction L'$.  Because $L$ is a perfect extension of
$L'$, we get $\theta_1=\theta_2$, whence
 $\phi_1=\phi_2$ proving that
$L^{\circ\circ}$ is a perfect extension of $(L')^{\circ\circ}$.

Conversely, let the conditions (1) and (2) hold and let $\theta' \in
\Con(L')$. By the (CEP) for the class of MS-algebras, $\theta'$ has
an extension to a congruence of $L$. To show
 that this extension is unique, assume that $\theta_1$ and
$\theta_2$ are extensions of $\theta'$ in $\Con(L)$.
 By Theorem~\ref{thm3}, the congruences $\theta_1, \theta_2$  can be represented by some congruence pairs $(\phi_1,\psi_1), (\phi_2,\psi_2)\in
 A(L)$.
It is easy to see that it follows
$(\phi_1\restriction\!(L')^{\circ\circ},\psi_1\restriction\! D(L')),
(\phi_2\restriction\! (L')^{\circ\circ},\psi_2\restriction\!
D(L'))\in A(L')$ and these pairs both represent the congruence
$\theta' \in \Con(L')$. Hence we have $\phi_1\restriction
(L')^{\circ\circ} = \phi_2\restriction  (L')^{\circ\circ}$ and
$\psi_1\restriction D(L')=\psi_2\restriction D(L')$. By the
conditions (1) and (2), we obtain $\phi_1=\phi_2$ and
$\psi_1=\psi_2$. Therefore $\theta_1=\theta_2$ as required.
\end{proof}

The following result is now an immediate consequence of the above
theorem and the equalities $B(L)=L_{S}^{\cc}$ and $D(L)=D(L_S)$.

\begin{cor}\label{5.2}
A principal MS-algebra $L$ is a perfect extension of its greatest Stone subalgebra $L_{S}$ if and
only if $L^{\cc}$ is a perfect extension of $B(L)$.
\end{cor}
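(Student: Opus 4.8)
The plan is to obtain this corollary as a direct specialisation of the preceding theorem to the subalgebra $L' = L_S$. First I would observe that $L_S$, the greatest Stone subalgebra of $L$, is an MS-subalgebra of $L$, so the theorem above applies verbatim with $L' = L_S$. It then asserts that $L$ is a perfect extension of $L_S$ if and only if the two conditions hold: (1) $D(L)$ is a perfect extension of $D(L_S)$, and (2) $L^{\cc}$ is a perfect extension of $(L_S)^{\cc}$. The whole argument consists in showing that, after substituting the two equalities recalled in the Preliminaries, condition (1) becomes vacuous and condition (2) becomes exactly the assertion of the corollary.

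Next I would invoke the identities $B(L) = L_S^{\cc}$ and $D(L) = D(L_S)$. The first rewrites $(L_S)^{\cc}$ as $B(L)$, so condition (2) reads literally ``$L^{\cc}$ is a perfect extension of $B(L)$'', which is the right-hand side of the biconditional we want. The second identity disposes of condition (1): since $D(L)$ and $D(L_S)$ coincide as lattices, every congruence of $D(L_S)$ is already a congruence of $D(L)$ and has itself as its only extension, so $D(L)$ is trivially a perfect extension of $D(L_S)$. Thus condition (1) is automatically satisfied and may be dropped, and the theorem's biconditional collapses to ``$L$ is a perfect extension of $L_S$ if and only if $L^{\cc}$ is a perfect extension of $B(L)$'', as claimed.

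I do not anticipate any genuine obstacle, as this is a bookkeeping corollary rather than a new argument. The only two points that warrant a moment's care are, first, the (trivial) fact that a structure is a perfect extension of itself, which is what makes the coincidence $D(L) = D(L_S)$ neutralise condition (1); and second, the identification $(L_S)^{\cc} = L_S^{\cc} = B(L)$, after which condition (2) is \emph{verbatim} the desired equivalence. Once these two identities are in place, no further computation is needed.
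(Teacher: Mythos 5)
Your proposal is correct and follows exactly the paper's own route: the corollary is stated there as an immediate consequence of the preceding theorem (applied with $L' = L_S$) together with the equalities $B(L)=L_{S}^{\cc}$ and $D(L)=D(L_S)$, which is precisely your argument. Your explicit remark that $D(L)=D(L_S)$ makes condition (1) hold trivially (a structure is a perfect extension of itself) is a correct and slightly more detailed spelling-out of what the paper leaves implicit.
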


We notice that the lattice of
MS-congruence pairs of the greatest Stone subalgebra $L_{S}$ of an MS-algebra $L$ is
$A(L_{S})=\{(\phi_{L_{S}^{\circ\circ}},\psi)\in
\Con(B(L))\times \Con(D(L))\mid (\phi,\psi)\in A(L)\}$,
where $\phi_{L_{S}^{\circ\circ}}$ is the restriction of $\phi$ to
$B(L)$.

\begin{cor}\label{5.3}
Let a principal MS-algebra $L$ be a perfect extension of its greatest Stone subalgebra $L_{S}$.
Then $\Con(L)\cong \Con(L_{S})$. 
\end{cor}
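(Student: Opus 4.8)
The plan is to read this off as an immediate instance of the general principle recorded just after the definition of perfect extension in Section~\ref{GP}: whenever an algebra $A$ is a perfect extension of a subalgebra $B$, one has $\Con(A)\cong\Con(B)$. Here I simply take $A=L$ and $B=L_{S}$, and the hypothesis of the corollary supplies exactly the required perfect-extension relationship, so $\Con(L)\cong\Con(L_{S})$. No appeal to the triple construction, to Theorem~\ref{thm3}, or to the preceding Corollary~\ref{5.2} is needed; the statement is a formal consequence of the definition together with the fact that MS-algebras satisfy the (CEP).

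For completeness I would spell out why perfect extension forces the isomorphism. First I consider the restriction map $\rho\colon \Con(L)\to\Con(L_{S})$, $\theta\mapsto\theta\restriction L_{S}$, which is well defined and meet-preserving since meets of congruences are intersections. Second, because the class of MS-algebras has the (CEP), every $\psi\in\Con(L_{S})$ extends to some $\theta\in\Con(L)$, so $\rho$ is surjective. Third, the defining clause of a perfect extension, namely \emph{uniqueness} of the extension, yields injectivity of $\rho$: two congruences of $L$ restricting to the same congruence of $L_{S}$ are both extensions of it and hence coincide. Thus $\rho$ is a bijection.

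Finally I would verify that $\rho$ is an order-isomorphism, which upgrades the bijection to a lattice isomorphism. Monotonicity of $\rho$ is clear; for the inverse, if $\psi_{1}\subseteq\psi_{2}$ in $\Con(L_{S})$ have unique extensions $\bar\psi_{1},\bar\psi_{2}\in\Con(L)$, then $\bar\psi_{1}\cap\bar\psi_{2}$ restricts to $\psi_{1}\cap\psi_{2}=\psi_{1}$, so by uniqueness $\bar\psi_{1}\cap\bar\psi_{2}=\bar\psi_{1}$, i.e. $\bar\psi_{1}\subseteq\bar\psi_{2}$; hence $\rho^{-1}$ is monotone. The only (mild) obstacle is precisely this last point: a bijective monotone map between lattices need not be an order-isomorphism, so one must confirm that the extension map is monotone as well, which the intersection argument above provides. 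Everything else follows directly from (CEP) and the uniqueness built into the definition of a perfect extension.
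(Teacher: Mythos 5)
Your proposal is correct and takes exactly the paper's route: the paper gives Corollary~\ref{5.3} no separate proof, treating it as an instance of the remark in Section~\ref{GP} that whenever an algebra $A$ is a perfect extension of a subalgebra $B$ one has $\Con(A)\cong\Con(B)$, which is precisely how you read it off. Your supplementary justification of that remark (restriction map, surjectivity from (CEP)/existence of extensions, injectivity from uniqueness, and monotonicity of the inverse via the intersection argument) is sound, merely spelling out what the paper dismisses with ``of course''.
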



Let $L$ be a principal MS-algebra associated to a principal
MS-triple $(M,D,\varphi)$. Then obviously $L_{S}=\{(a,x)\in L\mid
a\in B(M)\}$ is the greatest Stone subalgebra of $L$ such that
$L_{S}^{\cc}=B(L)\cong B(M)$ and $D(L_{S})=D(L)\cong D$. Hence our
final corollary is stated in terms of principal MS-triples.

\begin{cor}\label{5.4}
Let $L$ be a principal MS-algebra associated with a principal
MS-triple $(M,D,\varphi)$ and let $L_{S}$ be the greatest Stone
subalgebra of $L$. Then $L$ is a perfect extension of $L_{S}$ if and
only if $M$ is a perfect extension of $B(M)$.
\end{cor}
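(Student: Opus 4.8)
The plan is to deduce this corollary directly from Corollary~\ref{5.2} by transporting the perfect-extension condition across the isomorphism $L^{\cc}\cong M$ furnished by Theorem~\ref{PC}. By Corollary~\ref{5.2}, $L$ is a perfect extension of $L_{S}$ if and only if $L^{\cc}$ is a perfect extension of $B(L)$, so it suffices to show that this latter condition is equivalent to $M$ being a perfect extension of $B(M)$.

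First I would write down the natural isomorphism $\eta\colon M\to L^{\cc}$ supplied by Theorem~\ref{PC}, namely $\eta(x)=(x,\varphi(x))$, and verify that it carries $B(M)$ \emph{exactly onto} $B(L)$, not merely into it. For $x\in M$ one computes $\eta(x)\vee\eta(x)^{\c}=(x\vee x^{\c},\,\varphi(x)\vee\varphi(x^{\c}))$, and since $\varphi$ is a $(0,1)$-homomorphism the second coordinate equals $\varphi(x\vee x^{\c})$, which is $1_D$ whenever $x\vee x^{\c}=1_M$. Hence $\eta(x)\in B(L)$ if and only if $x\vee x^{\c}=1_M$, that is, if and only if $x\in B(M)$. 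Thus $\eta$ restricts to an isomorphism $B(M)\to B(L)$, so the pairs $(M,B(M))$ and $(L^{\cc},B(L))$ are isomorphic as algebra--subalgebra pairs.

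The final step is to observe that being a perfect extension is invariant under such isomorphisms, since the notion is phrased purely in terms of congruences. An isomorphism $\eta\colon M\to L^{\cc}$ mapping $B(M)$ onto $B(L)$ induces a lattice isomorphism $\Con(M)\to\Con(L^{\cc})$ that commutes with restriction to the respective subalgebras; consequently every congruence of $B(M)$ has a unique extension to $M$ precisely when every congruence of $B(L)$ has a unique extension to $L^{\cc}$. Combining this equivalence with Corollary~\ref{5.2} yields the stated result.

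I do not anticipate a genuine obstacle here, as the statement is essentially a reformulation of Corollary~\ref{5.2} through the triple construction. The only point demanding care is the surjectivity claim in the second step, namely that $\eta$ maps $B(M)$ \emph{onto} $B(L)$ and not just into it; this is exactly where the homomorphism property of $\varphi$ is used, guaranteeing that closed elements of $M$ with complemented skeleton correspond to complemented elements of $L^{\cc}$.
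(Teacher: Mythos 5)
Your proposal is correct and takes essentially the same route as the paper: the paper also obtains this corollary as an immediate consequence of Corollary~\ref{5.2}, using the triple-construction identifications $L_{S}^{\cc}=B(L)\cong B(M)$ coming from the isomorphism $M\cong L^{\cc}$ of Theorem~\ref{PC}. Your write-up simply makes explicit the two points the paper treats as obvious, namely that $\eta(x)=(x,\varphi(x))$ maps $B(M)$ \emph{onto} $B(L)$ and that the perfect-extension property is invariant under isomorphisms of algebra--subalgebra pairs.
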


From the above results it follows that in the considered special
case when a principal MS-algebra $L$ is a perfect extension of its
greatest Stone subalgebra $L_{S}$, our solution in
Theorem~\ref{thm:prob1} to Problem 1 can slightly be simplified by
not considering all principal congruences  $\alpha\in \Con(M)$ but
only special principal congruences $\alpha\in \Con(M)$ of the form
$\alpha = \theta (0,a) = \theta (a',1)$.

Our reasoning here is as follows. If $M$ is a perfect extension of
$B(M)$, then every congruence $\theta_1$ on $M$ is generated by some
congruence on $B(M)$, which is a join of principal congruences on
$B(M)$. Since $B(M)$ is Boolean, every principal congruence on
$B(M)$ is of form $\theta (0,a) = \theta (a',1)$. Hence, $\theta_1$
is a join of such congruences and by using the same argument as in
the proof of Theorem~\ref{thm:prob1} we obtain:

\begin{cor}\label{cor:prob1}
Let $M$ be a de Morgan algebra that is a perfect extension of $B(M)$, let $D$ be a bounded distributive
lattice, and let $A$ be a sublattice of $\Con(M)\times \Con(D)$.

Then  $A$ is representable if and only if
\begin{newlist}
\item[\rm(1)] $A$ is join-closed and
\item[\rm(2)] $A$ is down-closed in first coordinate, that is, $(\theta_1,\theta_2)\in A$ and $\alpha\leq\theta_{1}$ imply $(\alpha,\theta_{2})\in
A$;
\item[\rm(3)] there exists a lattice $(0,1)$-homomorphism $\varphi: M\to D$ such that for every principal congruence $\alpha\in
\Con(M)$
 of the form $\alpha = \theta (0,a) = \theta (a',1)$ and every $\theta_{2}\in \Con(D)$
\[
(\theta_{1},\theta_2) \in A \iff \Con (\varphi)(\theta_1) \subseteq \theta_2.
\]
\end{newlist}
\end{cor}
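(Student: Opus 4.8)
The plan is to deduce this corollary from Theorem~\ref{thm:prob1} (and its unrestricted counterpart Theorem~\ref{thm:prob0}) by exploiting the extra hypothesis that $M$ is a perfect extension of $B(M)$. Conditions (1) and (2) here are word-for-word those of Theorem~\ref{thm:prob1}, and the present condition~(3) is a \emph{weakening} of the one there, since it only demands the biconditional for the special principal congruences $\theta(0,a)=\theta(a',1)$ rather than for all principal congruences of $M$. Hence the necessity of (1)--(3) is immediate: if $A$ is representable, Theorem~\ref{thm:prob1} already yields (1), (2), and the stronger form of (3), which in particular gives our (3). All the genuine work therefore lies in the sufficiency direction, and the whole point is to recover the \emph{full} condition~(3) of Theorem~\ref{thm:prob1} from its special-form restriction.

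The key structural fact I would establish first is that, when $M$ is a perfect extension of $B(M)$, \emph{every} $\theta_1\in\Con(M)$ is a join of principal congruences of $M$ of the special form $\theta(0,a)=\theta(a',1)$ with $a\in B(M)$. I would prove this in three steps. First, set $\beta:=\theta_1\restriction B(M)$ and let $\gamma$ be the congruence of $M$ generated by $\beta$; since $\gamma\le\theta_1$ forces $\gamma\restriction B(M)\le\beta$ while $\gamma\supseteq\beta$ forces $\gamma\restriction B(M)\ge\beta$, both $\gamma$ and $\theta_1$ are extensions of $\beta$, so by perfectness (uniqueness of extension) $\gamma=\theta_1$; thus $\theta_1$ is generated by $\beta$. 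Second, because $B(M)$ is Boolean, $\beta$ is a join of principal congruences $\theta_{B(M)}(0,a_i)$, and each of these equals $\theta_{B(M)}(a_i',1)$ because complementation is a congruence-compatible dual automorphism carrying the pair $(0,a_i)$ to $(1,a_i')$. Third, the congruence of $M$ generated by $\beta=\bigvee_i\theta_{B(M)}(0,a_i)$ is exactly $\bigvee_i\alpha_i$, where $\alpha_i=\theta_M(0,a_i)=\theta_M(a_i',1)$ is the corresponding special principal congruence of $M$ (again using that $0^{\c}=1$ and $a_i^{\c}=a_i'$ for $a_i\in B(M)$).

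With this fact in hand the argument proceeds exactly as in Theorem~\ref{thm:prob1}. Writing $\theta_1=\bigvee_i\alpha_i$ with each $\alpha_i$ of the special form, I would use join-closedness (1) together with first-coordinate down-closedness (2) to obtain $(\theta_1,\theta_2)\in A \iff (\forall i)\,(\alpha_i,\theta_2)\in A$; then condition~(3) to replace each membership $(\alpha_i,\theta_2)\in A$ by $\Con(\varphi)(\alpha_i)\subseteq\theta_2$; and finally the join-preservation of $\Con(\varphi)$, which gives $\bigvee_i\Con(\varphi)(\alpha_i)=\Con(\varphi)(\theta_1)$, to conclude $(\theta_1,\theta_2)\in A \iff \Con(\varphi)(\theta_1)\subseteq\theta_2$ for \emph{all} $\theta_1\in\Con(M)$. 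This is precisely condition~(**) of Theorem~\ref{thm:prob0}, whence $A$ is representable.

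The hard part will be the first step of the structural fact, namely verifying that perfectness of $B(M)\le M$ forces every $\theta_1\in\Con(M)$ to be generated by its restriction $\theta_1\restriction B(M)$. The subtlety is that "perfect extension" only asserts uniqueness of extensions of congruences coming \emph{from} $B(M)$, so one must first recognise an arbitrary $\theta_1$ as being itself the extension of $\theta_1\restriction B(M)$; the comparison with the generated congruence $\gamma$ and the appeal to uniqueness are exactly what close this gap. Once the reduction to congruences of the Boolean algebra $B(M)$ is secured, the remaining identifications --- that Boolean principal congruences take the form $\theta(0,a)=\theta(a',1)$, that joins computed in $\Con(B(M))$ match the generated joins in $\Con(M)$, and that $\Con(\varphi)$ preserves joins --- are routine and mirror the proof of Theorem~\ref{thm:prob1}.
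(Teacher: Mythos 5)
Your proposal is correct and follows essentially the same route as the paper: the paper's justification (given in the paragraph preceding the corollary) is exactly that perfectness makes every $\theta_1\in\Con(M)$ the congruence generated by its restriction to $B(M)$, hence a join of special principal congruences $\theta(0,a)=\theta(a',1)$, after which the argument of Theorem~\ref{thm:prob1} applies verbatim. Your uniqueness-of-extension argument (comparing $\theta_1$ with the congruence $\gamma$ generated by $\theta_1\restriction B(M)$) supplies in full detail the step the paper only asserts, so it is a faithful, slightly more careful rendering of the same proof.
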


\section{Examples}\label{exam}

In the second part of the previous section we considered Problem~1
in the special case when a principal MS-algebra $L$ is a perfect
extension of its greatest Stone subalgebra $L_{S}$. When we
associate the principal MS-algebra $L$ with a principal MS-triple
$(M,D,\varphi)$, we know by Corollary~\ref{5.4} that this special
case occurs
when the de Morgan algebra $M$ is a perfect extension of its Boolean subalgebra $B(M)$. In this section we illustrate in examples 
when this condition is satisfied as well as when it is not
satisfied.

In Figure~\ref{fig:M1M2}
 we see two small examples of de Morgan algebras: $M_1$ is the four-element subdirectly irreducible de Morgan algebra and
$M_2$ is the de Morgan algebra $\bold 1\oplus {\bold 2}^3\oplus \bold 1$. 
For both $i=1, 2$ the Boolean subalgebra $B(M_i)=\{0,1\}$ is simple.
(The elements of $B(M_i)$ are in all figures shaded.)

\begin{figure}[ht]
\centering
\begin{tikzpicture}[scale=0.625]
  \begin{scope}
    \node[anchor=north] at (-1,0) {$M_1$};
    \node[shaded] (00) at (0,0) {};
    \node[unshaded] (10) at (2,2) {};
    \node[unshaded] (01) at (-2,2) {};
    \node[shaded] (11) at (0,4) {};
    \draw[order] (00) -- (10) -- (11);
    \draw[order] (00) -- (01) -- (11);
    \node[label,anchor=west] at (00) {$0 = 1^\circ$};
    \node[label,anchor=west] at (10) {$b=b^\circ$};
    \node[label,anchor=east] at (01) {$a=a^\circ$};
    \node[label,anchor=east] at (11) {$1 = 0^\circ$};
  \end{scope}
  \begin{scope}[xshift=8cm]
    \node[anchor=north] at (2,0) {$M_2$ with $\theta'$};
    \node[shaded] (0) at (0,0) {};
    \node[unshaded] (000) at (0,2) {};
    \node[unshaded] (100) at (2,4) {};
    \node[unshaded] (010) at (0,4) {};
    \node[unshaded] (001) at (-2,4) {};
    \node[unshaded] (110) at (2,6) {};
    \node[unshaded] (101) at (0,6) {};
    \node[unshaded] (011) at (-2,6) {};
    \node[unshaded] (111) at (0,8) {};
    \node[shaded] (1) at (0,10) {};
    \draw[order] (0) -- (000);
    \draw[order] (111) -- (1);
    \draw[order] (000) -- (100) -- (110) -- (111);
    \draw[order] (000) -- (001) -- (011) -- (111);
    \draw[order] (000) -- (010);
    \draw[order] (101) -- (111);
    \draw[order] (110) -- (010) -- (011);
    \draw[order] (001) -- (101) -- (100);
    \draw[curvy] (000) to [bend left] (010);
    \draw[curvy] (000) to [bend right] (010);
    \draw[curvy] (001) to [bend left] (011);
    \draw[curvy] (001) to [bend right] (011);
    \draw[curvy] (100) to [bend left] (110);
    \draw[curvy] (100) to [bend right] (110);
    \draw[curvy] (101) to [bend left] (111);
    \draw[curvy] (101) to [bend right] (111);
    \node[label,anchor=east] at (0) {$0 = 1^\circ$};
    \node[label,anchor=west] at (000) {$a$};
    \node[label,anchor=east] at (001) {$b$};
    \node[label,anchor=west] at (010) {$c$};
    \node[label,anchor=west] at (100) {$d$};
    \node[label,anchor=east] at (011) {$b^\circ$};
    \node[label,anchor=west] at (101) {$c^\circ$};
    \node[label,anchor=west] at (110) {$d^\circ$};
    \node[label,anchor=west] at (111) {$a^\circ$};
    \node[label,anchor=east] at (1) {$1=0^\circ$};
    \end{scope}
\end{tikzpicture}
\caption{The de Morgan algebras $M_1$ and $M_2$.}\label{fig:M1M2}
\end{figure}
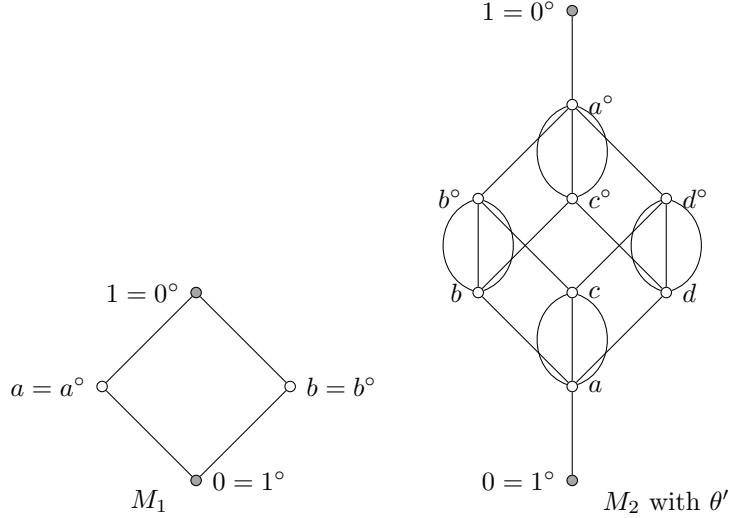

Since the four-element de Morgan algebra $M_1$ is simple, too, it is
automatically a perfect extension of $B(M_1)$. So the MS-algebra
$L_1$ taken from \cite[Example 3.3]{BGH11} and depicted in
Figure~\ref{fig:L1S} is a perfect extension of its greatest Stone
subalgebra $S:= (L_1)_S$ which is the three-element chain. Here
$L_1$ is created from the principal MS-triple $(M_1,D,\varphi_1)$
where $D=\{0,1\}$ is the two-element lattice and $\varphi_1: M_1 \to
D$ is given by $\varphi_1 (0) = \varphi_1 (a) = 0, \varphi_1 (b) =
\varphi_1 (1) = 1$. It is easy to see that
 $\theta = \triangle_{L_1}\cup {D(L_1)}^2\cup \{(b,0),(b,1)\}^2$ is the only non-trivial congruence of $L_1$ which is the unique
 extension
of the congruence $\theta_S = \triangle_{S}\cup {D(S)}^2$.

\begin{figure}[ht]
\centering
\begin{tikzpicture}[scale=0.625]
  \begin{scope}
    \node[anchor=north] at (-1,0) {$L_1$};
    \node[shaded] (00) at (0,0) {};
    \node[unshaded] (10) at (2,2) {};
    \node[unshaded] (01) at (-2,2) {};
    \node[unshaded] (11) at (0,4) {};
    \node[unshaded] (20) at (4,4) {};
    \node[shaded] (22) at (2,6) {};
    \draw[order] (00) -- (01) -- (11) -- (22);
    \draw[order] (00) -- (10) -- (11);
    \draw[order] (10) -- (20) -- (22);
    \node[label,anchor=west] at (00) {$(0,0)$};
    \node[label,anchor=west] at (10) {$(b,0)$};
    \node[label,anchor=east] at (01) {$(a,0)$};
    \node[label,anchor=east] at (11) {$(1,0)$};
    \node[label,anchor=west] at (20) {$(b,1)$};
    \node[label,anchor=west] at (22) {$(1,1)$};
  \end{scope}
  \begin{scope}[xshift=10cm]
    \node[anchor=north] at (2,0) {$S$ with $\theta_S$};
    \node[shaded] (00) at (0,0) {};
    \node[unshaded] (11) at (0,3) {};
    \node[shaded] (22) at (0,6) {};
     \draw[order] (00) -- (11) -- (22);
     \draw[curvy] (11) to [bend left] (22);
    \draw[curvy] (11) to [bend right] (22);
    \node[label,anchor=east] at (00) {$(0,0)$};
    \node[label,anchor=east] at (11) {$(1,0)$};
    \node[label,anchor=east] at (22) {$(1,1)$};
       \end{scope}
\end{tikzpicture}
\caption{The MS algebra $L_1$ and its Stone subalgebra $S$.}\label{fig:L1S}
\end{figure}
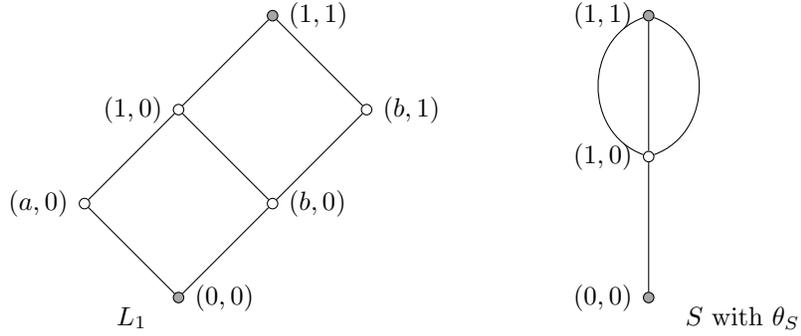

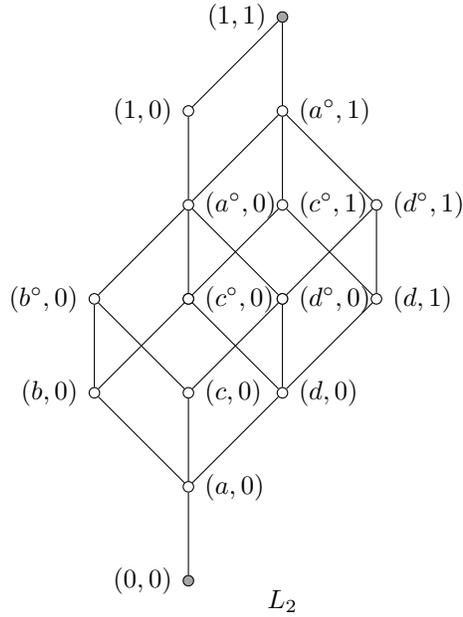
\begin{figure}[ht]
\centering
\begin{tikzpicture}[scale=0.625]
  \begin{scope}
    \node[anchor=north] at (2,0) {$L_2$};
    \node[shaded] (0) at (0,0) {};
    \node[unshaded] (000) at (0,2) {};
    \node[unshaded] (100) at (2,4) {};
    \node[unshaded] (010) at (0,4) {};
    \node[unshaded] (001) at (-2,4) {};
    \node[unshaded] (110) at (2,6) {};
    \node[unshaded] (101) at (0,6) {};
    \node[unshaded] (011) at (-2,6) {};
    \node[unshaded] (111) at (0,8) {};
    \node[unshaded] (1) at (0,10) {};
   \node[unshaded] (1001) at (4,6) {};
    \node[unshaded] (110) at (2,6) {};
     \node[unshaded] (1101) at (4,8) {};
    \node[unshaded] (101) at (0,6) {};
   \node[unshaded] (1011) at (2,8) {};
      \node[unshaded] (1111) at (2,10) {};
       \node[shaded] (11) at (2,12) {};
    \draw[order] (0) -- (000);
    \draw[order] (111) -- (1);
    \draw[order] (000) -- (100) -- (110) -- (111);
    \draw[order] (000) -- (001) -- (011) -- (111);
    \draw[order] (000) -- (010);
    \draw[order] (101) -- (111);
    \draw[order] (110) -- (010) -- (011);
    \draw[order] (001) -- (101) -- (100);
 \draw[order] (100) -- (1001);
\draw[order] (101) -- (1011);
\draw[order] (110) -- (1101);
    \draw[order] (111) -- (1111);
    \draw[order] (1001) -- (1011);
    \draw[order] (1001) -- (1101);
    \draw[order] (1101) -- (1111);
 \draw[order] (1011) -- (1111);
\draw[order] (1111) -- (11);
 \draw[order] (1) -- (11);
    \node[label,anchor=east] at (0) {$(0,0)$};
    \node[label,anchor=west] at (000) {$(a,0)$};
    \node[label,anchor=east] at (001) {$(b,0)$};
    \node[label,anchor=west] at (010) {$(c,0)$};
    \node[label,anchor=west] at (100) {$(d,0)$};
    \node[label,anchor=east] at (011) {$(b^\circ,0)$};
    \node[label,anchor=west] at (101) {$(c^\circ,0)$};
    \node[label,anchor=west] at (110) {$(d^\circ,0)$};
    \node[label,anchor=west] at (111) {$(a^\circ,0)$};
    \node[label,anchor=east] at (1) {$(1,0)$};
  \node[label,anchor=west] at (1001) {$(d,1)$};
    \node[label,anchor=west] at (1011) {$(c^\circ,1)$};
    \node[label,anchor=west] at (1101) {$(d^\circ,1)$};
    \node[label,anchor=west] at (1111) {$(a^\circ,1)$};
    \node[label,anchor=east] at (11) {$(1,1)$};
    \end{scope}
\end{tikzpicture}
\caption{The MS algebra $L_2$}\label{fig:L2}
\end{figure}

On the other hand, the ten-element de Morgan algebra $M_2$ is not
simple and so it is not a perfect extension of $B(M_2)$. A
non-trivial congruence $\theta'$ of $M_2$ is depicted in
Figure~\ref{fig:M1M2}.

For the principal MS-triple $(M_2,D,\varphi_2)$ where $D$ is the
same two-element lattice as above and $\varphi: M_2 \to D$ is given
by
\begin{align}
&\varphi_2(0)=\varphi_2(a)=\ \varphi_2(b)\ = \varphi_2(c)\ =  \varphi_2(b^\circ)=0,\notag\\
&\varphi_2(d)=\varphi_2(c^\circ)=\varphi_2(d^\circ)=\varphi_2(a^\circ)=\varphi_2(1)=1\notag
\end{align}
we obtain the principal MS-algebra $L_2$ in Figure~\ref{fig:L2}. By
Corollary~\ref{5.2}, $L_2$ is not a perfect extension of its
greatest Stone subalgebra $(L_2)_{S}$ which is again the
three-element chain $S =\{(0,0),(1,0),(1,1)\}$. It is easy to check
that the only non-trivial congruence $\theta_S = \triangle_{S}\cup
{D(S)}^2$ of $S$ has at least two different extensions in
$\Con(L_2)$, namely the congruences
\begin{align}
\theta_1 = \triangle_{L_2}
&\cup {D(L_2)}^2 \cup \{(d,0),(d,1)\}^2\cup\{(d^{\circ},0),(d^{\circ},1)\}^{2}\cup\{(c^{\circ},0),(c^{\circ},1)\}^{2}\notag\\
&\cup\{(a^{\circ},0),(a^{\circ},1)\}^{2},\notag\\
\theta_2 = \triangle_{L_2}
&\cup {D(L_2)}^2\cup \{(d,0),(d,1),(d^{\circ},0),(d^{\circ},1)\}^{2}\cup \{(a,0),(c,0)\}^{2}\notag\\
&\cup\{(b,0),(b^{\circ},0)\}^{2}\cup\{(c^{\circ},0),(c^{\circ},1),(a^{\circ},0),(a^{\circ},1)\}^{2}.\notag
\end{align}

\bibliographystyle{amsplain}
\renewcommand{\bibname}{References}

\end{document}